\def\BibTeX{{\rm B\kern-.05em{\sc i\kern-.025em b}\kern-.08em
    T\kern-.1667em\lower.7ex\hbox{E}\kern-.125emX}}
\newtheorem{theorem}{Theorem}
\newtheorem{assumption}{Assumption}
\newtheorem{definition}{Definition}
\newtheorem{remark}{Remark}
\newtheorem{proposition}{Proposition}
\newtheorem{corollary}{Corollary}
\begin{document}
\title{On the Equivalence of Youla, System-level and Input-output Parameterizations}

\author{Yang Zheng, Luca Furieri, Antonis Papachristodoulou, Na Li, and Maryam Kamgarpour
\thanks{This work is supported by NSF career 1553407, AFOSR Young Investigator Program, and ONR Young Investigator Program. A. Papachristodoulou is supported by the EPSRC Grant EP/M002454/1.
L. Furieri and M. Kamgarpur are gratefully supported by ERC Starting Grant CONENE. }
\thanks{Y. Zheng and N. Li are with SEAS and CGBC, Harvard
University, Cambridge, MA 02138. (E-mails: zhengy@g.harvard.edu; nali@seas.harvard.edu). }
\thanks{L. Furieri and M. Kamgarpour are with the Automatic Control Laboratory,  ETH Zurich, Switzerland. (E-mails: \{furieril, mkamgar\}@control.ee.ethz.ch).}
%\thanks{Y. Zheng and N. Li are with John A. Paulson School of Engineering and Applied Sciences, and also  with the Harvard Center for Green Buildings and Cities, Harvard University, Cambridge, MA 02138. (E-mails: zhengy@g.harvard.edu; nali@seas.harvard.edu). }
%\thanks{L. Furieri and M. Kamgarpour are with the Automatic Control Laboratory, Department of Information Technology and Electrical Engineering, ETH Zurich, Switzerland. (E-mails: \{furieril, mkamgar\}@control.ee.ethz.ch).}
\thanks{A. Papachristodoulou is with the Department of Engineering Science , University of
Oxford, United Kingdom. (E-mail: antonis@eng.ox.ac.uk).
}
}

\maketitle

\begin{abstract}
    A convex parameterization of internally stabilizing controllers is fundamental for many controller synthesis procedures. The celebrated Youla parameterization relies on a doubly-coprime factorization of the system, while the recent system-level and input-output characterizations require no doubly-coprime factorization but a set of equality constraints for achievable closed-loop responses. In this paper, we present explicit affine mappings among Youla, system-level and input-output parameterizations. Two direct implications of the affine mappings are 1) any convex problem in Youla, system level, or input-output parameters can be equivalently and convexly formulated in any other one of these frameworks,
    % in terms of Youla, system-level, input-output parameters can be equivalently shifted from each other,
    including the convex system-level synthesis (SLS); 2)  the condition of  quadratic  invariance  (QI) is sufficient and necessary for the classical distributed control problem to  admit  an  equivalent  convex  reformulation  in  terms  of  Youla, system-level, or input-output parameters.
\end{abstract}

\begin{IEEEkeywords}
Stabilizing controller, Youla parameterization, System-level synthesis, Quadratic invariance.
\end{IEEEkeywords}

\section{Introduction}
\label{sec:introduction}

One of the most fundamental problems in control theory is to design a feedback controller that stabilizes a dynamical system. Additionally, one can further design an optimal controller by optimizing a certain performance measure~\cite{zhou1996robust}. It is well-known that the set of stabilizing controllers is in general non-convex, and hence, hard to optimize directly over. %over which it is hard to search directly.
Many optimal controller synthesis procedures first parameterize all stabilizing controllers and the corresponding closed-loop responses in a convex way, and then minimize relevant performance measures over the new parameter(s).

For finite dimensional linear-time-invariant (LTI) systems, the set of LTI stabilizing feedback controllers is fully characterized by the celebrated \emph{Youla parameterization}~\cite{youla1976modern}, where a doubly coprime  factorization  of  the  system is used. In~\cite{youla1976modern}, it is shown that the Youla parameterization allows for optimizing the Youla parameter (or system response) directly, instead of the controller itself, leading to a convex problem. Also, customized performance specifications on the closed-loop system can be incorporated with Youla parameterization via convex optimization~\cite{boyd1991linear}. Moreover, the foundational results of robust and optimal control are built on the Youla parameterization~\cite{francis1987course,zhou1996robust}. Note that a doubly-coprime factorization of the system must be computed as a preliminary step in Youla parameterization. Recently, a system-level parameterization~\cite{wang2019system} and an input-output parameterization~\cite{furieri2019input} were introduced to characterize the set of all LTI stabilizing controllers, with no need of computing a doubly-coprime factorization of the system \emph{a priori}. Similar to Youla, the system-level and input-output parameterizations treat certain closed-loop responses as design parameters. The controller synthesis is thus shifted from designing a controller to  designing  the  closed  loop  responses  directly.
%This treatment shifts the controller synthesis  from the design of a controller to the design of the closed loop responses.
This idea of synthesizing closed-loop responses in a convex way was extensively discussed as \emph{closed-loop convexity} in~\cite[Chapter 6]{boyd1991linear}.

The Youla~\cite{youla1976modern}, system-level~\cite{wang2019system}, input-output~\cite{furieri2019input} parameterizations are equivalent since they characterize the same set of stabilizing controllers. However, their explicit relationships have not been fully established before. The main objective of this paper is to reveal an explicit equivalence of Youla, system-level, and input-output parameterizations. In particular, we present explicit \emph{affine mappings} among the Youla parameter, system-level parameters, and input-output parameters. One direct consequence is that any convex problem in terms of Youla, system-level, input-output parameters can be equivalently and convexly formulated into any other one of these three frameworks. %shifted from each other.
Therefore, the so-called convex system-level synthesis (SLS)~\cite{wang2019system} admits an equivalent convex formulation in terms of Youla or input-output parameters. Another consequence is that if one controller synthesis task does not allow for an equivalent convex reformulation in Youla, a convex reformulation in the system-level or input-output parameterizations is not possible either. Consider the classical distributed controller synthesis task where a subspace constraint is imposed on the controller~\cite{rotkowitz2006characterization}. It has been shown that a notion of quadratic invariance (QI) is sufficient and necessary for the distributed control problem to admit an equivalent convex reformulation in the Youla parameter~\cite{rotkowitz2006characterization,lessard2015convexity}. Accordingly, the QI condition is also sufficient and necessary when using the system-level and input-output parameterizations. For systems with constraints beyond QI, we highlight that a notion of sparsity invariance (SI)~\cite{Furieri2019Sparsity} can be used to derive convex inner-approximations using Youla, system-level, or input-output characterizations.
% We note that the SLS in~\cite{wang2019system} imposes structural constraints on certain system responses directly, instead of the controller, leading to a different problem compared to the classical decentralized controller synthesis.

%System-level synthesis is another perspective, resulting a different problem.

The rest of this paper is organized as follows. We introduce some preliminaries in Section~\ref{Section:preliminaries}, and review the Youla, system-level, and input-output parameterizations in Section~\ref{Section:paramterization}. Explicit affine relationships and their implication with QI are presented in Section~\ref{Section:equivalence}. We discuss distributed controller synthesis with non-QI constraints in Section~\ref{Section:specialcase}, and conclude the paper in Section~\ref{section:conclusion}.

\noindent\emph{Notation:} We use lower and upper case letters (\emph{e.g.} $x$ and $A$) to denote vectors and matrices, respectively. Lower and upper case boldface letters (\emph{e.g.} $\mathbf{x}$ and $\mathbf{G}$) are used to denote signals and transfer matrices, respectively. For clarity, we consider discrete-time LTI systems only, but unless stated otherwise, all results can be extended to the continuous-time setting. We denote the set of real-rational proper stable transfer matrices as $\mathcal{RH}_{\infty}$. $\mathbf{G} \in \frac{1}{z}\mathcal{RH}_{\infty}$ means $\mathbf{G}$ is stable and strictly proper.

% mention the push through identity, and inverse identity

\section{Preliminaries}~\label{Section:preliminaries}
\vspace{-6mm}
\subsection{System model}
We consider discrete-time LTI systems of the form
\begin{equation} \label{eq:LTI}
        \begin{aligned}
            {x}[t+1] &= A x[t] + B_1 w[t] + B_2u[t], \\
            z[t] &= C_1 x[t] + D_{11}w[t] + D_{12}u[t], \\
            y[t] &= C_2x[t] + D_{21}w[t] + D_{22} u[t],
        \end{aligned}
    \end{equation}
    where $x[t],u[t],w[t],y[t],z[t]$ are the state vector, control action, external disturbance, measurement, and regulated output at time $t$, respectively. System~\eqref{eq:LTI} can be written as
    $$
        \mathbf{P} = \left[ \begin{array}{c|cc} A & B_1 & B_2 \\
        \hline
                                     C_1 & D_{11} & D_{12} \\
                                     C_2 & D_{21} & D_{22} \end{array} \right]
                  = \begin{bmatrix} \mathbf{P}_{11} & \mathbf{P}_{12} \\\mathbf{P}_{21} & \mathbf{P}_{22} \end{bmatrix},
    $$
    where $\mathbf{P}_{ij} = C_i(zI - A)^{-1}B_j +D_{ij}$. We refer to $\mathbf{P}$ as the open-loop plant model.

    Consider a dynamic output feedback controller
    $
        \mathbf{u} = \mathbf{K}\mathbf{y},
    $
    where $\mathbf{K}$  has a state space realization
    %has a state space realization
    \begin{equation} \label{eq:ControllerLTI}
        \begin{aligned}
            {\xi}[t+1] &= A_k \xi[t] + B_k y[t], \\
            u[t] &= C_k \xi[t] + D_{k}y[t],
        \end{aligned}
    \end{equation}
    with $\xi$ as the internal state of controller $\mathbf{K}$. We have $\mathbf{K} = C_k(zI - A_k)^{-1}B_k + D_k$. Figure~\ref{fig:LTI} shows a schematic diagram of the interconnection of  plant $\mathbf{P}$ and  controller $\mathbf{K}$.  Throughout the paper, we make the following standard assumptions.
\begin{assumption}
    Both the plant and controller realizations are stabilizable and detectable, \emph{i.e.}, $(A, B_2)$ and $(A_k, B_k)$ are stabilizable, and $(A,C_2)$ and $(A_k,C_k)$ are detectable.
\end{assumption}

\begin{assumption}
    The interconnection in Fig.~\ref{fig:LTI} is well-posed, \emph{i.e.}, $I - D_{22}D_k$ is invertible.%The plant is strictly proper, \emph{i.e.}, $D_{22} = 0.$
\end{assumption}

\subsection{Stabilization and optimal control}

%We now introduce the definition of internal stability of the system in Fig.~\ref{eq:LTI}.
\begin{definition}
    The system in Fig.~\ref{eq:LTI} is \emph{internally stable} if it is well-posed, and the states $(x[t],x_k[t])$ converge to zero as $t\rightarrow \infty$ for all initial states $(x[0],x_k[0])$ when $w[t] = 0, \forall t$.
\end{definition}

We say the controller $\mathbf{K}$ \emph{internally stabilizes} the plant $\mathbf{P}$ if the interconnected system in Fig.~\ref{eq:LTI} is {internally stable}. The set of all stabilizing controllers is defined as
\begin{equation}
    \mathcal{C}_{\text{stab}} := \{\mathbf{K} \mid \mathbf{K} \; \text{internally stabilizes} \; \mathbf{P}\}.
\end{equation}
In addition to stability, it is desirable to find a controller $\mathbf{K}$ that minimizes a suitable norm (\emph{e.g.}, $\mathcal{H}_2$ or $\mathcal{H}_{\infty}$) of the closed-loop transfer matrix from $\mathbf{w}$ to $\mathbf{z}$. This amounts to solving the following optimal control formulation~\cite{zhou1996robust}:
\begin{equation} \label{eq:OCP}
        \begin{aligned}
            \min_{\mathbf{K}} \quad &\|f(\mathbf{P},\mathbf{K})\| \\
            \text{subject to} \quad & \mathbf{K} \in \mathcal{C}_{\text{stab}},
        \end{aligned}
    \end{equation}
where $f(\mathbf{P},\mathbf{K}) = \mathbf{P}_{11} + \mathbf{P}_{12}\mathbf{K}(I - \mathbf{P}_{22}\mathbf{K})^{-1}\mathbf{P}_{21}$. It is known that set $\mathcal{C}_{\text{stab}}$ is non-convex. One can construct explicit examples where $\mathbf{K}_1, \mathbf{K}_2 \in \mathcal{C}_{\text{stab}}$ but $\frac{1}{2}(\mathbf{K}_1+ \mathbf{K}_2) \notin \mathcal{C}_{\text{stab}}$. Also, $f(\mathbf{P},\mathbf{K})$ is in general a non-convex function of $\mathbf{K}$. Therefore, problem~\eqref{eq:OCP} is non-convex in the current form.

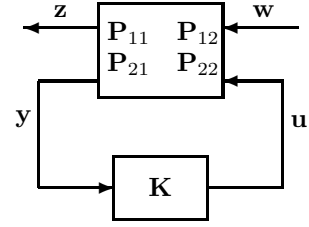
\begin{figure}[t]
\setlength{\belowcaptionskip}{0pt}
\setlength{\abovecaptionskip}{0pt}
\begin{center}\setlength{\unitlength}{0.008in}
\begin{picture}(243,135)(140,410)
\thicklines
\put(180,440){\vector(1, 0){ 50}}
\put(180,510){\line( 0, -1){ 70}}
%\put(128,440){\vector( 1, 0){ 48}}
%\put(140,445){\makebox(0,0)[lb]{$v_1$}}
%\put(181,440){\circle{10}}%\put(340,550){\line( 0, 1){ 70}}
\put(180,510){\line( 1, 0){ 40}}

%\put(340,440){\circle{10}}%\put(340,550){\line( 0, 1){ 70}}
\put(340,510){\vector( -1, 0){ 40}}
\put(340,510){\line( 0,-1){ 70.5}}
\put(290,440){\line(1, 0){ 50}}
%\put(393,440){\vector(-1, 0){ 48}}
%\put(365,445){\makebox(0,0)[lb]{$v_2$}}

%\put(140,530){\vector( 1, 0){ 80}}
%\put(300,530){\vector( 1, 0){ 80}}
%\put(230,620){\line(-1, 0){ 50}}
%\put(180,620){\line( 0,-1){ 70}}
\put(220,545){\vector( -1, 0){ 50}}
\put(350,545){\vector( -1, 0){ 50}}
%\put(340,620){\vector(-1, 0){ 50}}
\put(230,420){\framebox(60,40){}}
%\put(230,600){\framebox(60,40){}}
\put(220,500){\framebox(80,60){}}
\put(165,480){\makebox(0,0)[lb]{$\mathbf{y}$}}
\put(345,480){\makebox(0,0)[lb]{$\mathbf{u}$}}
\put(320,552){\makebox(0,0)[lb]{$\mathbf{w}$}}
\put(190,552){\makebox(0,0)[lb]{$\mathbf{z}$}}
\put(252,435){\makebox(0,0)[lb]{$\mathbf{K}$}}
\put(225,510){\makebox(0,0)[lb]{$\begin{matrix}\mathbf{P}_{11}&\mathbf{P}_{12}\\\mathbf{P}_{21}&\mathbf{P}_{22}\end{matrix}$}}
%\put(253,615){\makebox(0,0)[lb]{$\Delta$}}
%\put(158,580){\makebox(0,0)[lb]{$u_{\Delta}$}}
%\put(345,580){\makebox(0,0)[lb]{$y_{\Delta}$}}
\end{picture}\end{center}
\caption{Interconnection of the plant $\mathbf{P}$ and controller $\mathbf{K}$}
\label{fig:LTI}
\end{figure}

% \begin{figure}[t]
%   \centering
%
%   % Requires \usepackage{graphicx}
%   \includegraphics[width=0.24 \textwidth]{LTI.png}\\
%   \caption{Interconnection of the plant $\mathbf{P}$ and controller $\mathbf{K}$}\label{fig:LTI}
% \end{figure}

\section{Parameterization of Stabilizing Controllers}\label{Section:paramterization}
To solve the optimal control problem~\eqref{eq:OCP}, one common method is to derive an equivalent convex formulation via a suitable change of variables. A classical technique is the Youla parameterization~\cite{youla1976modern}. Two recent approaches are the so-called system-level parameterization (SLP)~\cite{wang2019system}, and input-output parameterization (IOP)~\cite{furieri2019input}. A common idea among these three approaches is the parameterization of all stabilizing controllers $\mathcal{C}_{\text{stab}}$ using certain closed-loop responses. We review their main results in this section.  %The main purpose of this paper is to show explicit equivalence of Youla, System-level, and Input-output characterizations.
%; for completeness,

\subsection{Youla parameterization}

The classical Youla parameterization is based on a doubly-coprime factorization of the plant $\mathbf{P}_{22}$, defined as follows.
    \begin{definition}
        A collection of stable transfer matrices, $\mathbf{U}_l, \mathbf{V}_l,\mathbf{N}_l,\mathbf{M}_l,\mathbf{U}_r, \mathbf{V}_r,\mathbf{N}_r,\mathbf{M}_r \in \mathcal{RH_{\infty}}$ is called a doubly-coprime factorization of $\mathbf{P}_{22}$ if
        $
            \mathbf{P}_{22} = \mathbf{N}_r\mathbf{M}_r^{-1} = \mathbf{M}_l^{-1}\mathbf{N}_l
        $
        and
        $$
            \begin{bmatrix} \mathbf{U}_l & -\mathbf{V}_l \\ -\mathbf{N}_l & \mathbf{M}_l\end{bmatrix}
            \begin{bmatrix} \mathbf{M}_r & \mathbf{V}_r \\ \mathbf{N}_r & \mathbf{U}_r\end{bmatrix} = I.
        $$
    \end{definition}

    Such doubly-coprime factorizations can always be computed if $\mathbf{P}_{22}$ is stabilizable and detectable~\cite{zhou1996robust}. The authors in~\cite{youla1976modern} established the following equivalence
    \begin{equation} \label{eq:youla}
       % \begin{aligned}
            \mathcal{C}_{\text{stab}} = \{\mathbf{K} = (\mathbf{V}_r - \mathbf{M}_r\mathbf{Q})(\mathbf{U}_r - \mathbf{N}_r\mathbf{Q})^{-1} \mid   \mathbf{Q} \in \mathcal{RH}_{\infty} \}\footnote{Equivalently, $\mathcal{C}_{\text{stab}}=\{(\mathbf{U}_l-\mathbf{QN}_l)^{-1}(\mathbf{V}_l-\mathbf{QM}_l)|~\mathbf{Q} \in \mathcal{RH}_\infty\}$.},
    %    \end{aligned}
    \end{equation}
    where $\mathbf{Q}$ is called the Youla parameter. Using the change of variables
    $\mathbf{K} = (\mathbf{V}_r - \mathbf{M}_r\mathbf{Q})(\mathbf{U}_r - \mathbf{N}_r\mathbf{Q})^{-1}$, it is not difficult to derive
    $$
        f(\mathbf{P},\mathbf{K}) = \mathbf{T}_{11} + \mathbf{T}_{12}\mathbf{Q}\mathbf{T}_{21},
    $$
    where $\mathbf{T}_{11} = \mathbf{P}_{11} + \mathbf{P}_{12} \mathbf{V}_{r}\mathbf{M}_{l} \mathbf{P}_{21}, \mathbf{T}_{12} = -\mathbf{P}_{12}\mathbf{M}_{r}$, and $\mathbf{T}_{21} = \mathbf{M}_{l}\mathbf{P}_{21}$. Consequently, Problem~\eqref{eq:OCP} can be equivalently reformulated in terms of the Youla parameter as
    \begin{equation} \label{eq:OCPYoula}
        \begin{aligned}
            \min_{\mathbf{Q}} \quad &\|\mathbf{T}_{11} + \mathbf{T}_{12}\mathbf{Q}\mathbf{T}_{21}\| \\
            \text{subject to} \quad & \mathbf{Q} \in \mathcal{RH}_{\infty}.
        \end{aligned}
    \end{equation}
    One direct benefit is that~\eqref{eq:OCPYoula} is convex with respect to the Youla parameter $\mathbf{Q}$.

\subsection{System-level parameterization (SLP)}
 In~\cite{wang2019system}, the authors proposed a system-level parameterization for $\mathcal{C}_{\text{stab}}$. This approach is based on the closed-loop maps from process and measurement disturbances to state and control action. In particular, assuming a strictly proper plant $\mathbf{P}_{22},$ \emph{i.e.}, $D_{22} = 0$, we use ${\delta_x}[t] = B_1 w[t]$ to denote the disturbance on the state and $\delta_y[t] = D_{21}w[t]$ to denote the disturbance on the measurement. The dynamics of plant~\eqref{eq:LTI} can be written as
    \begin{equation*} %\label{eq:LTIsls}
        \begin{aligned}
            {x}[t+1] &= A x[t] +  B_2u[t] + \delta_x[t], \\
            y[t] &= C_2x[t] + \delta_y[t].
        \end{aligned}
    \end{equation*}

    Then, with a stabilizing controller $\mathbf{u} = \mathbf{K}\mathbf{y}$, the system responses  %$\{\mathbf{R}, \mathbf{M}, \mathbf{N}, \mathbf{L}\}$
    from perturbations $(\mathbf{\delta_x}, \mathbf{\delta_y})$ to $(\mathbf{x},\mathbf{u})$ are
    \begin{equation} \label{eq:LTIsls}
        \begin{aligned}
            \begin{bmatrix} \mathbf{x} \\\mathbf{u} \end{bmatrix} = \begin{bmatrix} \mathbf{R} &  \mathbf{N}\\  \mathbf{M} &  \mathbf{L} \end{bmatrix}  \begin{bmatrix} \mathbf{\delta_x} \\ \mathbf{\delta_y} \end{bmatrix},
        \end{aligned}
    \end{equation}
    where the system responses $\{\mathbf{R}, \mathbf{M}, \mathbf{N}, \mathbf{L}\}$  are in the following affine subspace~\cite{wang2019system}
    \begin{subequations} \label{eq:slp}
        \begin{align}
            \begin{bmatrix}zI - A & -B_2 \end{bmatrix}\begin{bmatrix} \mathbf{R} &  \mathbf{N}\\  \mathbf{M} &  \mathbf{L} \end{bmatrix} & = \begin{bmatrix}I & 0\end{bmatrix},  \label{eq:slp_s1}\\
             \begin{bmatrix} \mathbf{R} &  \mathbf{N}\\  \mathbf{M} &  \mathbf{L} \end{bmatrix} \begin{bmatrix}zI - A \\ -C_2 \end{bmatrix} & = \begin{bmatrix}I \\ 0\end{bmatrix}, \label{eq:slp_s2} \\
             \mathbf{R}, \mathbf{M}, \mathbf{N} \in \frac{1}{z} \mathcal{RH}_{\infty}, \quad& \mathbf{L} \in \mathcal{RH}_{\infty}. \label{eq:slp_s3}
        \end{align}
    \end{subequations}

    In~\cite{wang2019system}, it is proved that for strictly proper\footnote{The equivalence~\eqref{eq:sls} only holds for strictly proper plants, \emph{i.e.}, $D_{22} = 0$. For a general proper plant $D_{22} \neq 0$, the authors in~\cite{wang2019system} present another controller implementation that internally stabilizes the system; see~\cite[Section III.D]{wang2019system}. Instead, Youla~\eqref{eq:youla} and input-output~\eqref{eq:iop} parameterizations work for both strictly proper and general proper plants. Throughout the paper, we assume a strictly proper plant for the system-level parameterization. } plant $\mathbf{P}_{22}$, the set of all internally stabilizing controllers can be written as
    \begin{equation} \label{eq:sls}
        \begin{aligned}
            \mathcal{C}_{\text{stab}}  = \{\mathbf{K} = \mathbf{L} - \mathbf{M}\mathbf{R}^{-1}&\mathbf{N}  \mid  \mathbf{R},   \, \mathbf{M},  \, \mathbf{N}, \, \mathbf{L} \; \text{are in the }  \\
             &\text{affine subspace~\eqref{eq:slp_s1}-\eqref{eq:slp_s3}}   \}.
        \end{aligned}
    \end{equation}
    Also, the cost function $f(\mathbf{P},\mathbf{K})$ can be expressed in terms of the system responses $\mathbf{R}, \mathbf{M}, \mathbf{N}, \mathbf{L}$. In particular, Problem~\eqref{eq:OCP} can be equivalently reformulated as %in terms of the system responses as
     \begin{equation} \label{eq:OCPsls}
        \begin{aligned}
            \min_{\mathbf{R}, \mathbf{M}, \mathbf{N}, \mathbf{L}} \quad &\left\| \begin{bmatrix} C_1 & D_{12} \end{bmatrix}\begin{bmatrix} \mathbf{R} &  \mathbf{N}\\  \mathbf{M} &  \mathbf{L} \end{bmatrix} \begin{bmatrix} B_1 \\ D_{21} \end{bmatrix}  + D_{11} \right\| \\
            \text{subject to} \quad & \eqref{eq:slp_s1}-\eqref{eq:slp_s3}.
        \end{aligned}
    \end{equation}
    It is easy to see that~\eqref{eq:OCPsls} is convex in terms of  $\mathbf{R}, \mathbf{M}, \mathbf{N}, \mathbf{L}$.

\subsection{Input-output parameterization (IOP)}

%{\color{red} picture and replace X, Y, W ,Z }

\begin{figure}[t]
  \centering
  % Requires \usepackage{graphicx}
  \begin{center}\setlength{\unitlength}{0.008in}
\begin{picture}(243,115)(140,410)
\thicklines
\put(187,440){\vector(1, 0){ 43}}
\put(180,510){\vector( 0, -1){ 65}}
\put(128,440){\vector( 1, 0){ 48}}
\put(140,445){\makebox(0,0)[lb]{$\delta_{\mathbf{y}}$}}
\put(181,440){\circle{10}}%\put(340,550){\line( 0, 1){ 70}}
\put(180,510){\line( 1, 0){ 48.5}}

\put(340,510){\circle{10}}%\put(340,550){\line( 0, 1){ 70}}
\put(335,510){\vector( -1, 0){ 45}}
\put(340,439.5){\vector( 0,1){ 65.5}}
\put(290,440){\line(1, 0){ 50}}
\put(393,510){\vector(-1, 0){ 48}}
\put(365,515){\makebox(0,0)[lb]{$\delta_{\mathbf{u}}$}}

%\put(140,530){\vector( 1, 0){ 80}}
%\put(300,530){\vector( 1, 0){ 80}}
%\put(230,620){\line(-1, 0){ 50}}
%\put(180,620){\line( 0,-1){ 70}}
%\put(220,545){\vector( -1, 0){ 50}}
%\put(350,545){\vector( -1, 0){ 50}}
%\put(340,620){\vector(-1, 0){ 50}}
\put(230,420){\framebox(60,40){}}
%\put(230,600){\framebox(60,40){}}
\put(230,490){\framebox(60,40){}}
\put(200,445){\makebox(0,0)[lb]{$\mathbf{y}$}}
\put(315,515){\makebox(0,0)[lb]{$\mathbf{u}$}}
%\put(320,552){\makebox(0,0)[lb]{$w$}}
%\put(190,552){\makebox(0,0)[lb]{$z$}}
\put(252,435){\makebox(0,0)[lb]{$\mathbf{K}$}}
\put(248,505){\makebox(0,0)[lb]{$\mathbf{P}_{22}$}}
%\put(253,615){\makebox(0,0)[lb]{$\Delta$}}
%\put(158,580){\makebox(0,0)[lb]{$u_{\Delta}$}}
%\put(345,580){\makebox(0,0)[lb]{$y_{\Delta}$}}
\put(185,450){\makebox(0,0)[lb]{\footnotesize +}}
\put(167,445){\makebox(0,0)[lb]{\footnotesize +}}

\put(350,515){\makebox(0,0)[lb]{\footnotesize +}}
\put(345,498){\makebox(0,0)[lb]{\footnotesize +}}

\end{picture}\end{center}
  \caption{Input-output stability.}\label{fig:IOS}
\end{figure}
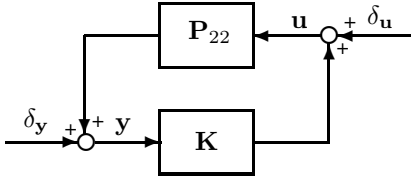

Recently, an input-output parameterization for $\mathcal{C}_{\text{stab}}$ was introduced in~\cite{furieri2019input}. As shown in Fig.~\ref{fig:IOS}, the idea is based on a classical internal stability result in terms of the following closed-loop responses
    \begin{equation} \label{eq:LTIio}
        \begin{aligned}
           % \begin{bmatrix} \mathbf{y} \\\mathbf{u} \end{bmatrix} = \begin{bmatrix} (I - \mathbf{P}_{22}\mathbf{K})^{-1} &  (I - \mathbf{P}_{22}\mathbf{K})^{-1}\mathbf{P}_{22}\\ \mathbf{K}(I - \mathbf{P}_{22}\mathbf{K})   &  (I - \mathbf{K}\mathbf{P}_{22})\end{bmatrix}  \begin{bmatrix} \mathbf{\delta_y} \\ \mathbf{\delta_u} \end{bmatrix}.
             \begin{bmatrix} \mathbf{y} \\\mathbf{u} \end{bmatrix} = \begin{bmatrix} \mathbf{Y} &  \mathbf{W}\\ \mathbf{U}   &  \mathbf{Z}\end{bmatrix}  \begin{bmatrix} \mathbf{\delta_y} \\ \mathbf{\delta_u} \end{bmatrix},
        \end{aligned}
    \end{equation}
where $\delta_{\mathbf{u}}$ is a disturbance in the input, \emph{i.e.}, $\mathbf{u} = \mathbf{K}y + \delta_{\mathbf{u}}$.
    Under Assumption 1, it is known that $\mathbf{K}$ internally stabilizes $\mathbf{P}$ if and only if the four transfer matrices in~\eqref{eq:LTIio} are stable~\cite{francis1987course}. With a stabilizing controller $\mathbf{K}$, the closed-loop responses $\mathbf{Y}, \mathbf{U}, \mathbf{W}, \mathbf{Z}$ are in the following affine subspace~\cite{furieri2019input}
    \begin{subequations} \label{Eq:Param}
	 \begin{align}
	 &\begin{bmatrix}
	 I&-\mathbf{P}_{22}
	 \end{bmatrix}\begin{bmatrix}
	 \mathbf{Y}&\mathbf{W}\\\mathbf{U}&\mathbf{Z}
	 \end{bmatrix}=\begin{bmatrix}
	 I&0
	 \end{bmatrix}\,, \label{eq:aff1}\\
	 & \begin{bmatrix}
	 \mathbf{Y}&\mathbf{W}\\\mathbf{U}&\mathbf{Z}
	 \end{bmatrix}\begin{bmatrix}
	 -\mathbf{P}_{22}\\I
	 \end{bmatrix}=\begin{bmatrix}
	 0\\I
	 \end{bmatrix}\label{eq:aff2}\,,\\
	 &\begin{matrix}
	 \mathbf{Y}, \mathbf{U}, \mathbf{W}, \mathbf{Z}\in \mathcal{RH}_\infty.
	 \end{matrix}\label{eq:aff3}
	 \end{align}
	 \end{subequations}
	
	It is shown in~\cite{furieri2019input} that the set of all internally stabilizing controllers can be represented as
    \begin{equation} \label{eq:iop}
        \begin{aligned}
            \mathcal{C}_{\text{stab}}  = \{\mathbf{K} = \mathbf{U}\mathbf{Y}^{-1}  \mid \mathbf{Y}, &\mathbf{U}, \mathbf{W},  \mathbf{Z} \; \text{are in the} \\
            &\text{affine subspace~\eqref{eq:aff1}-\eqref{eq:aff3}}   \}.
        \end{aligned}
    \end{equation}
    Furthermore, we have $f(\mathbf{P},\mathbf{K}) = \mathbf{P}_{11} + \mathbf{P}_{12}\mathbf{U}\mathbf{P}_{21}$~\cite{furieri2019input}. Accordingly, Problem~\eqref{eq:OCP} can be equivalently reformulated in terms of the system responses as
     \begin{equation} \label{eq:OCPiop}
        \begin{aligned}
            \min_{\mathbf{Y}, \mathbf{U}, \mathbf{W}, \mathbf{Z}} \quad &\left\| \mathbf{P}_{11} + \mathbf{P}_{12}\mathbf{U}\mathbf{P}_{21}\right\| \\
            \text{subject to} \quad & ~\eqref{eq:aff1}-\eqref{eq:aff3}.
        \end{aligned}
    \end{equation}
    It is easy to see that~\eqref{eq:OCPiop} is convex in terms of  $\mathbf{Y}, \mathbf{U}, \mathbf{W}, \mathbf{Z}$.

\section{Explicit Equivalence of Youla Parameterization, SLP, and IOP}
\label{Section:equivalence}

As discussed in the last section, the set of all stabilizing controllers $\mathcal{C}_{\text{stab}}$ can be parameterized in three different ways, \emph{i.e.},~\eqref{eq:youla},~\eqref{eq:sls}, and~\eqref{eq:iop}, and the optimal control problem~\eqref{eq:OCP} admits three equivalent convex reformulations,  \emph{i.e.},~\eqref{eq:OCPYoula},~\eqref{eq:OCPsls}, and~\eqref{eq:OCPiop}. Implicitly,~\eqref{eq:youla},~\eqref{eq:sls}, and~\eqref{eq:iop} are equivalent. However, an explicit relationship between Youla parameterization, SLP, and IOP is not clear from their definitions.

In this section, we present explicit \emph{affine mappings} among Youla parameterization, SLP, and IOP. The consequences are as follows: 1) any convex system-level synthesis (SLS) introduced by~\cite{wang2019system} can be equivalently reformulated into a convex problem in terms of the Youla parameter $\mathbf{Q}$ or the input-output parameters $\mathbf{Y}, \mathbf{U}, \mathbf{W}, \mathbf{Z}$, and vice versa; 2) building on the explicit affine mappings, we show that the notion of quadratic invariance~\cite{rotkowitz2006characterization} allows for equivalent convex reformulations of classical distributed optimal control in either Youla parameterization, SLP, or IOP.

\subsection{Explicit equivalence}

The explicit equivalence between Youla parameterization and IOP is presented in~\cite{furieri2019input}:
\begin{theorem}[\!\cite{furieri2019input}]
\label{th:Youla_eq}
Let $\mathbf{U}_r,\mathbf{V}_r,\mathbf{U}_l,\mathbf{V}_l,\mathbf{M}_r,\mathbf{M}_l,\mathbf{N}_r,\mathbf{N}_l$ be any  doubly-coprime factorization of $\mathbf{P}_{22}$. The following statements hold.
\begin{enumerate}
\item For any $\mathbf{Q} \in \mathcal{RH}_\infty$, the following transfer matrices
    \begin{subequations} \label{eq:youla_iop}
    \begin{align}
    &\mathbf{Y}=(\mathbf{U}_r-\mathbf{N}_r\mathbf{Q})\mathbf{M}_l\,, \label{eq:Q_to_X_1}\\
    &\mathbf{U}=(\mathbf{V}_r-\mathbf{M}_r\mathbf{Q})\mathbf{M}_l\,,\\
    &\mathbf{W}=(\mathbf{U}_r-\mathbf{N}_r\mathbf{Q})\mathbf{N}_l\,,\\
    &\mathbf{Z}=I+(\mathbf{V}_r-\mathbf{M}_r\mathbf{Q})\mathbf{N}_l\,,\label{eq:Q_to_X_2}
    \end{align}
    \end{subequations}
belong to the affine subspace \eqref{eq:aff1}-\eqref{eq:aff3} and are such that $\mathbf{U}\mathbf{Y}^{-1}=(\mathbf{V}_r-\mathbf{M}_r\mathbf{Q})(\mathbf{U}_r-\mathbf{N}_r\mathbf{Q})^{-1}$.
\item For any $(\mathbf{Y},\mathbf{U},\mathbf{W},\mathbf{Z})$ in the affine subspace  (\ref{eq:aff1})-(\ref{eq:aff3}), the transfer matrix
    \begin{equation}
    \label{eq:Youla_with_XYWZ}
    \mathbf{Q}=\mathbf{V}_l\mathbf{Y}\mathbf{U}_r-\mathbf{U}_l\mathbf{U}\mathbf{U}_r-\mathbf{V}_l\mathbf{W}\mathbf{V}_r+\mathbf{U}_l\mathbf{Z}\mathbf{V}_r-\mathbf{V}_l\mathbf{U}_r\,,
    \end{equation}
is such that $\mathbf{Q} \in \mathcal{RH}_\infty$ and $(\mathbf{V}_r-\mathbf{M}_r\mathbf{Q})(\mathbf{U}_r-\mathbf{N}_r\mathbf{Q})^{-1}=\mathbf{U}\mathbf{Y}^{-1}$.
\end{enumerate}
\end{theorem}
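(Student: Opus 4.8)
The plan is to verify the two directions by direct substitution into the defining identities of the respective affine subspaces, exploiting the two Bézout-type identities contained in the doubly-coprime factorization: namely $\mathbf{U}_l\mathbf{M}_r-\mathbf{V}_l\mathbf{N}_r=I$, $\mathbf{M}_l\mathbf{U}_r-\mathbf{N}_l\mathbf{V}_r=I$, $\mathbf{U}_l\mathbf{V}_r=\mathbf{V}_l\mathbf{U}_r$, $\mathbf{M}_l\mathbf{N}_r=\mathbf{N}_l\mathbf{M}_r$, together with the factorizations $\mathbf{P}_{22}=\mathbf{N}_r\mathbf{M}_r^{-1}=\mathbf{M}_l^{-1}\mathbf{N}_l$ (equivalently $\mathbf{M}_l\mathbf{N}_r=\mathbf{N}_l\mathbf{M}_r$ and $\mathbf{M}_l\mathbf{P}_{22}=\mathbf{N}_l$, $\mathbf{P}_{22}\mathbf{M}_r=\mathbf{N}_r$).

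For statement~1, I would first observe that the transfer matrices in~\eqref{eq:youla_iop} lie in $\mathcal{RH}_\infty$ trivially, since they are polynomial (in fact bilinear) expressions in stable transfer matrices. Then I would check~\eqref{eq:aff1}: compute $\mathbf{Y}-\mathbf{P}_{22}\mathbf{U}=(\mathbf{U}_r-\mathbf{N}_r\mathbf{Q})\mathbf{M}_l-\mathbf{P}_{22}(\mathbf{V}_r-\mathbf{M}_r\mathbf{Q})\mathbf{M}_l$ and use $\mathbf{P}_{22}\mathbf{M}_r=\mathbf{N}_r$ to cancel the $\mathbf{Q}$-terms, leaving $(\mathbf{U}_r-\mathbf{P}_{22}\mathbf{V}_r)\mathbf{M}_l=(\mathbf{M}_l^{-1}\mathbf{M}_l\mathbf{U}_r-\mathbf{M}_l^{-1}\mathbf{N}_l\mathbf{V}_r)\mathbf{M}_l$; the identity $\mathbf{M}_l\mathbf{U}_r-\mathbf{N}_l\mathbf{V}_r=I$ then gives $I$. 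Similarly $\mathbf{W}-\mathbf{P}_{22}\mathbf{Z}=(\mathbf{U}_r-\mathbf{N}_r\mathbf{Q})\mathbf{N}_l-\mathbf{P}_{22}-\mathbf{P}_{22}(\mathbf{V}_r-\mathbf{M}_r\mathbf{Q})\mathbf{N}_l$; again the $\mathbf{Q}$-terms cancel via $\mathbf{P}_{22}\mathbf{M}_r=\mathbf{N}_r$ and what remains collapses to $0$ using the same Bézout identity and $\mathbf{P}_{22}=\mathbf{M}_l^{-1}\mathbf{N}_l$. The condition~\eqref{eq:aff2} is handled symmetrically, using $\mathbf{M}_l\mathbf{P}_{22}=\mathbf{N}_l$ and $\mathbf{U}_l\mathbf{M}_r-\mathbf{V}_l\mathbf{N}_r=I$. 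Finally, the controller identity $\mathbf{U}\mathbf{Y}^{-1}=(\mathbf{V}_r-\mathbf{M}_r\mathbf{Q})\mathbf{M}_l\mathbf{M}_l^{-1}(\mathbf{U}_r-\mathbf{N}_r\mathbf{Q})^{-1}=(\mathbf{V}_r-\mathbf{M}_r\mathbf{Q})(\mathbf{U}_r-\mathbf{N}_r\mathbf{Q})^{-1}$ is immediate once one notes $\mathbf{M}_l$ is invertible (as $\mathbf{P}_{22}=\mathbf{M}_l^{-1}\mathbf{N}_l$ presupposes).

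For statement~2, I would substitute the affine constraints~\eqref{eq:aff1}-\eqref{eq:aff2} — written componentwise as $\mathbf{Y}-\mathbf{P}_{22}\mathbf{U}=I$, $\mathbf{W}-\mathbf{P}_{22}\mathbf{Z}=0$, $-\mathbf{P}_{22}\mathbf{Y}+\mathbf{W}=0$ wait, more precisely $\mathbf{Y}(-\mathbf{P}_{22})+\mathbf{W}=0$ and $\mathbf{U}(-\mathbf{P}_{22})+\mathbf{Z}=I$ — into the right-hand side of~\eqref{eq:Youla_with_XYWZ} to show $\mathbf{Q}\in\mathcal{RH}_\infty$: stability is again automatic since everything is a product of stable transfer matrices, so the real content is the identity $(\mathbf{V}_r-\mathbf{M}_r\mathbf{Q})(\mathbf{U}_r-\mathbf{N}_r\mathbf{Q})^{-1}=\mathbf{U}\mathbf{Y}^{-1}$. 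I would verify this by showing $(\mathbf{V}_r-\mathbf{M}_r\mathbf{Q})\mathbf{Y}=(\mathbf{U}_r-\mathbf{N}_r\mathbf{Q})\mathbf{U}$ — equivalently $\mathbf{V}_r\mathbf{Y}-\mathbf{U}_r\mathbf{U}=\mathbf{M}_r\mathbf{Q}\mathbf{Y}-\mathbf{N}_r\mathbf{Q}\mathbf{U}=(\mathbf{M}_r\mathbf{Y}-\mathbf{N}_r\mathbf{U})\cdot(\text{something})$ — but in fact the cleanest route is to plug the given $\mathbf{Q}$ into $\mathbf{M}_r\mathbf{Q}$ and $\mathbf{N}_r\mathbf{Q}$, repeatedly apply the Bézout identities $\mathbf{M}_r\mathbf{V}_l=\mathbf{V}_r\mathbf{M}_l$-type relations (obtained by multiplying the doubly-coprime matrix equation out on both sides) and the constraints on $\mathbf{Y},\mathbf{U},\mathbf{W},\mathbf{Z}$, and watch the expression telescope. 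Rather than reproduce this algebra, I would simply invoke that~\eqref{eq:Youla_with_XYWZ} is designed so that composing the map of statement~1 after that of statement~2 (and vice versa) is the identity on the respective affine subspaces, which pins down $\mathbf{Q}$ uniquely and forces the controller identity.

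The main obstacle is purely bookkeeping: there are eight coprime factors and four parameters, and the identity~\eqref{eq:Youla_with_XYWZ} has five terms, so the substitution in statement~2 generates many cross terms that must be grouped correctly using the right Bézout relation at each step. The conceptual content is minimal — stability is free throughout because all expressions are polynomial in $\mathcal{RH}_\infty$ elements — so the proof is really a verification that the stated formulas satisfy the stated identities, with the doubly-coprime equation $\begin{bmatrix}\mathbf{U}_l&-\mathbf{V}_l\\-\mathbf{N}_l&\mathbf{M}_l\end{bmatrix}\begin{bmatrix}\mathbf{M}_r&\mathbf{V}_r\\\mathbf{N}_r&\mathbf{U}_r\end{bmatrix}=I$ (and, if needed, its mirror image with the factors in the opposite order, which also holds) supplying every cancellation.
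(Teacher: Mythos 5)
The paper itself gives no proof of Theorem~\ref{th:Youla_eq}: it is imported verbatim from \cite{furieri2019input}, so there is no in-paper argument to compare against, and your attempt has to be judged on its own. Your treatment of statement~1 is correct and essentially complete in outline: the $\mathbf{Q}$-terms cancel via $\mathbf{P}_{22}\mathbf{M}_r=\mathbf{N}_r$, the residual terms collapse via $\mathbf{M}_l\mathbf{U}_r-\mathbf{N}_l\mathbf{V}_r=I$ and $\mathbf{M}_l\mathbf{P}_{22}=\mathbf{N}_l$, stability of \eqref{eq:youla_iop} is free because the formulas are polynomial in stable factors, and the controller identity is the trivial cancellation of $\mathbf{M}_l\mathbf{M}_l^{-1}$.

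Statement~2 is where your write-up stops short of a proof. You are right that $\mathbf{Q}\in\mathcal{RH}_\infty$ is automatic, but the identity $(\mathbf{V}_r-\mathbf{M}_r\mathbf{Q})(\mathbf{U}_r-\mathbf{N}_r\mathbf{Q})^{-1}=\mathbf{U}\mathbf{Y}^{-1}$ is asserted via two routes, neither of which you carry out, and the one you ultimately ``invoke'' has a logical hole as stated. The composition argument (``statement~2 after statement~1 is the identity, which pins down $\mathbf{Q}$'') only applies to quadruples $(\mathbf{Y},\mathbf{U},\mathbf{W},\mathbf{Z})$ lying in the image of the map \eqref{eq:youla_iop}; to conclude anything about an \emph{arbitrary} element of \eqref{eq:aff1}--\eqref{eq:aff3} you must additionally establish that this map is surjective onto the affine subspace, which amounts to invoking \eqref{eq:youla} and \eqref{eq:iop} together with uniqueness of the closed-loop responses for a fixed stabilizing controller --- legitimate, but it must be said, and the composition-is-identity claim is itself a five-term computation using $\mathbf{U}_l\mathbf{M}_r-\mathbf{V}_l\mathbf{N}_r=I$ and $\mathbf{M}_l\mathbf{U}_r-\mathbf{N}_l\mathbf{V}_r=I$ that you never perform. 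The direct route you sketch does go through and is shorter than you fear: using the transposed Bezout relations $\mathbf{N}_r\mathbf{V}_l=\mathbf{U}_r\mathbf{M}_l-I$, $\mathbf{N}_r\mathbf{U}_l=\mathbf{U}_r\mathbf{N}_l$, $\mathbf{M}_r\mathbf{V}_l=\mathbf{V}_r\mathbf{M}_l$, $\mathbf{M}_r\mathbf{U}_l=I+\mathbf{V}_r\mathbf{N}_l$, together with $\mathbf{M}_l\mathbf{Y}-\mathbf{N}_l\mathbf{U}=\mathbf{M}_l$ and $\mathbf{M}_l\mathbf{W}-\mathbf{N}_l\mathbf{Z}=0$ (which are \eqref{eq:aff1} premultiplied by $\mathbf{M}_l$), one finds $\mathbf{U}_r-\mathbf{N}_r\mathbf{Q}=\mathbf{Y}\mathbf{U}_r-\mathbf{W}\mathbf{V}_r$ and $\mathbf{V}_r-\mathbf{M}_r\mathbf{Q}=\mathbf{U}\mathbf{U}_r+(I-\mathbf{Z})\mathbf{V}_r$; then \eqref{eq:aff2} gives $I-\mathbf{Z}=-\mathbf{U}\mathbf{P}_{22}=-\mathbf{U}\mathbf{Y}^{-1}\mathbf{W}$, whence $\mathbf{V}_r-\mathbf{M}_r\mathbf{Q}=\mathbf{U}\mathbf{Y}^{-1}(\mathbf{U}_r-\mathbf{N}_r\mathbf{Q})$ as required. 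Until one of these two routes is actually executed, statement~2 remains unproved in your proposal.
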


Theorem~\ref{th:Youla_eq} presents explicit {affine mappings} between Youla parameterization and IOP: {any element in the Youla parameterization~\eqref{eq:youla} corresponds to an element in the IOP~\eqref{eq:iop}, and they represent the same controller.} The following result presents explicit {affine mappings} between SLP and IOP.
\begin{theorem}
\label{th:slp_eq}
Consider a strictly proper plant $\mathbf{P}_{22}$, \emph{i.e.}, $D_{22} = 0$. The following statements hold.
\begin{enumerate}
\item For any $\mathbf{R}, \mathbf{M}, \mathbf{N}, \mathbf{L}$ satisfying the affine subspace~\eqref{eq:slp_s1}-\eqref{eq:slp_s3}, the transfer matrices
  \begin{subequations} \label{eq:slp-iop}
    \begin{align}
    \mathbf{Y} &= C_2\mathbf{N} + I, \label{eq:slp-iop1}\\
      \mathbf{U} &=  \mathbf{L}, \label{eq:slp-iop2}\\
      \mathbf{W} &=  C_2\mathbf{R}B_2, \label{eq:slp-iop3}\\
      \mathbf{Z} &=  \mathbf{M}B_2 + I, \label{eq:slp-iop4}
    \end{align}
    \end{subequations}
belong to the affine subspace~\eqref{eq:aff1}-\eqref{eq:aff3} and are such that
    $
        \mathbf{L} - \mathbf{M}\mathbf{R}^{-1}\mathbf{N} = \mathbf{U}\mathbf{Y}^{-1}.
    $
\item For any $\mathbf{Y}, \mathbf{U}, \mathbf{W}, \mathbf{Z}$ satisfying the affine subspace~\eqref{eq:aff1}-\eqref{eq:aff3}, the transfer matrices
  \begin{subequations} \label{eq:iop-sls}
    \begin{align}
            \mathbf{R} &= (zI - A)^{-1} + (zI - A)^{-1}B_2\mathbf{U}C_2(zI - A)^{-1} \label{eq:iop-sls1} \\
            \mathbf{M} & = \mathbf{U}C_2(zI - A)^{-1},  \label{eq:iop-sls2}\\
            \mathbf{N} &= (zI - A)^{-1}B_2\mathbf{U},  \label{eq:iop-sls3} \\
            \mathbf{L} &= \mathbf{U}, \label{eq:iop-sls4}
        \end{align}
    \end{subequations}
    belong to the affine subspace~\eqref{eq:slp_s1}-\eqref{eq:slp_s3} and are such that
    $
        \mathbf{U}\mathbf{Y}^{-1} = \mathbf{L} - \mathbf{M}\mathbf{R}^{-1}\mathbf{N}.
    $
\end{enumerate}
\end{theorem}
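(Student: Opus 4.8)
The plan is to verify the two directions by direct substitution, using the state-space realization of $\mathbf{P}_{22}=C_2(zI-A)^{-1}B_2$ (recall $D_{22}=0$) and the defining affine constraints on each side. For part~1, I would start from the four SLP identities~\eqref{eq:slp_s1}--\eqref{eq:slp_s2}, which unpack into eight relations: $(zI-A)\mathbf{R}-B_2\mathbf{M}=I$, $(zI-A)\mathbf{N}-B_2\mathbf{L}=0$, $\mathbf{R}(zI-A)-\mathbf{N}C_2=I$, $\mathbf{M}(zI-A)-\mathbf{L}C_2=0$. Plugging the candidate~\eqref{eq:slp-iop1}--\eqref{eq:slp-iop4} into the IOP constraint~\eqref{eq:aff1}, i.e. $\mathbf{Y}-\mathbf{P}_{22}\mathbf{U}=I$ and $\mathbf{W}-\mathbf{P}_{22}\mathbf{Z}=0$: the first becomes $C_2\mathbf{N}+I-C_2(zI-A)^{-1}B_2\mathbf{L}=I$, which is exactly $C_2[(zI-A)^{-1}]\big((zI-A)\mathbf{N}-B_2\mathbf{L}\big)=0$; the second becomes $C_2\mathbf{R}B_2-C_2(zI-A)^{-1}B_2(\mathbf{M}B_2+I)=C_2(zI-A)^{-1}\big((zI-A)\mathbf{R}-B_2\mathbf{M}-I\big)B_2=0$. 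The constraint~\eqref{eq:aff2}, $-\mathbf{P}_{22}\mathbf{Y}+\mathbf{U}=0$ and $-\mathbf{P}_{22}\mathbf{W}+\mathbf{Z}=I$, unpacks analogously using the two right-multiplication identities. Membership in $\mathcal{RH}_\infty$~\eqref{eq:aff3} is immediate from~\eqref{eq:slp_s3} since $B_2,C_2$ are constant matrices and the combinations $C_2\mathbf{N}$, $\mathbf{L}$, $C_2\mathbf{R}B_2$, $\mathbf{M}B_2$ are all stable (the strictly-proper terms plus $I$ are in $\mathcal{RH}_\infty$).

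For part~2, I would do the mirror computation. First check that the candidate~\eqref{eq:iop-sls1}--\eqref{eq:iop-sls4} lies in $\frac{1}{z}\mathcal{RH}_\infty$ resp.\ $\mathcal{RH}_\infty$: since $(A,B_2)$ is stabilizable, $(A,C_2)$ detectable, and $\mathbf{U}\in\mathcal{RH}_\infty$ with $\mathbf{K}=\mathbf{U}\mathbf{Y}^{-1}$ stabilizing, the key fact is that $\mathbf{U}$ equals the closed-loop map from $\delta_{\mathbf{u}}$ to $\mathbf{u}$, so $(zI-A)^{-1}B_2\mathbf{U}$, $\mathbf{U}C_2(zI-A)^{-1}$, and $(zI-A)^{-1}B_2\mathbf{U}C_2(zI-A)^{-1}$ are exactly (up to the free term $(zI-A)^{-1}$) the SLP responses of the controller realized from $\mathbf{U}\mathbf{Y}^{-1}$; stability then follows because this controller stabilizes the plant, which is the content of the SLP theorem~\eqref{eq:sls}. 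Concretely, I would verify the constraints~\eqref{eq:slp_s1}--\eqref{eq:slp_s2} by substitution: e.g. $(zI-A)\mathbf{R}-B_2\mathbf{M}=I+B_2\mathbf{U}C_2(zI-A)^{-1}-B_2\mathbf{U}C_2(zI-A)^{-1}=I$, and $(zI-A)\mathbf{N}-B_2\mathbf{L}=B_2\mathbf{U}-B_2\mathbf{U}=0$; the right-multiplication identities need $\mathbf{R}(zI-A)-\mathbf{N}C_2=I+(zI-A)^{-1}B_2\mathbf{U}C_2-(zI-A)^{-1}B_2\mathbf{U}C_2=I$ and $\mathbf{M}(zI-A)-\mathbf{L}C_2=\mathbf{U}C_2-\mathbf{U}C_2=0$, all purely algebraic.

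It remains to check the controller-identity $\mathbf{U}\mathbf{Y}^{-1}=\mathbf{L}-\mathbf{M}\mathbf{R}^{-1}\mathbf{N}$ in each direction. For this I would compute $\mathbf{M}\mathbf{R}^{-1}\mathbf{N}$ using the Woodbury/push-through identity: with $\mathbf{R}=(zI-A)^{-1}\big(I+B_2\mathbf{U}C_2(zI-A)^{-1}\big)$ one gets $\mathbf{R}^{-1}=\big(I+B_2\mathbf{U}C_2(zI-A)^{-1}\big)^{-1}(zI-A)$, so $\mathbf{M}\mathbf{R}^{-1}\mathbf{N}=\mathbf{U}C_2(zI-A)^{-1}\big(I+B_2\mathbf{U}C_2(zI-A)^{-1}\big)^{-1}B_2\mathbf{U}$; applying the push-through identity $C_2(zI-A)^{-1}(I+B_2\mathbf{U}C_2(zI-A)^{-1})^{-1}B_2=\mathbf{P}_{22}(I+\mathbf{U}\mathbf{P}_{22})^{-1}$ reduces the right-hand side to $\mathbf{U}-\mathbf{U}\mathbf{P}_{22}(I+\mathbf{U}\mathbf{P}_{22})^{-1}\mathbf{U}=\mathbf{U}(I+\mathbf{P}_{22}\mathbf{U})^{-1}$, and since $\mathbf{Y}=I+\mathbf{P}_{22}\mathbf{U}$ by~\eqref{eq:aff1} this is exactly $\mathbf{L}-\mathbf{M}\mathbf{R}^{-1}\mathbf{N}=\mathbf{U}-\mathbf{U}\mathbf{Y}^{-1}\cdot(\cdot)$... more directly $\mathbf{L}-\mathbf{M}\mathbf{R}^{-1}\mathbf{N}=\mathbf{U}-\mathbf{U}+\mathbf{U}\mathbf{Y}^{-1}=\mathbf{U}\mathbf{Y}^{-1}$ after the cancellation. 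The analogous computation for part~1, going from $\{\mathbf{R},\mathbf{M},\mathbf{N},\mathbf{L}\}$ to the IOP variables, uses the eight SLP identities to rewrite $\mathbf{U}\mathbf{Y}^{-1}=\mathbf{L}(C_2\mathbf{N}+I)^{-1}$ and show it equals $\mathbf{L}-\mathbf{M}\mathbf{R}^{-1}\mathbf{N}$; here I expect to use $(C_2\mathbf{N}+I)^{-1}$ via $\mathbf{N}=\mathbf{R}\,(zI-A)^{-1}B_2\cdots$ type manipulations, or more cleanly the identity $\mathbf{R}^{-1}\mathbf{N}=(zI-A)^{-1}B_2\mathbf{L}(C_2\mathbf{N}+I)^{-1}$ derived from the constraint rows. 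The main obstacle is precisely this rational-function bookkeeping around the matrix inverses $\mathbf{R}^{-1}$ and $\mathbf{Y}^{-1}$: one must be careful that all the push-through manipulations are valid (the relevant $I+(\cdot)$ blocks are invertible, which follows from well-posedness and internal stability), and that the stability claims in part~2 genuinely use the SLP/IOP characterizations rather than being merely formal.
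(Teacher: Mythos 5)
Your proposal is correct and follows essentially the same route as the paper: in part~1, direct substitution of \eqref{eq:slp-iop} into the IOP constraints followed by a push-through identity to get $\mathbf{L}-\mathbf{M}\mathbf{R}^{-1}\mathbf{N}=\mathbf{L}(I+C_2\mathbf{N})^{-1}=\mathbf{U}\mathbf{Y}^{-1}$; in part~2, identification of \eqref{eq:iop-sls} with the closed-loop responses of the stabilizing controller $\mathbf{K}=\mathbf{U}\mathbf{Y}^{-1}$ to obtain stability (your purely algebraic check of \eqref{eq:slp_s1}--\eqref{eq:slp_s2} is a small simplification over the paper, which infers those from the closed-loop interpretation as well). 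One transcription slip worth fixing: the two identities extracted from \eqref{eq:aff2} should read $-\mathbf{Y}\mathbf{P}_{22}+\mathbf{W}=0$ and $-\mathbf{U}\mathbf{P}_{22}+\mathbf{Z}=I$, not $-\mathbf{P}_{22}\mathbf{Y}+\mathbf{U}=0$ and $-\mathbf{P}_{22}\mathbf{W}+\mathbf{Z}=I$; the rest of your computations use the correct right-multiplication forms.
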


\begin{proof}
    \emph{Statement 1}: considering the affine relationships~\eqref{eq:slp_s1}-\eqref{eq:slp_s3} and $\mathbf{P}_{22} = C_2(zI - A)^{-1}B_2$, we have the following algebraic equalities:
    $$
        \begin{aligned}
        \mathbf{Y} - \mathbf{P}_{22}\mathbf{U} &= C_2\mathbf{N} + I - \mathbf{P}_{22}\mathbf{L} \\
        &= C_2(\mathbf{N} - (zI - A)^{-1}B_2\mathbf{L}) + I  = I, \\
        \mathbf{W} - \mathbf{P}_{22}\mathbf{Z} &=  C_2\mathbf{R}B_2 - \mathbf{P}_{22}( \mathbf{M}B_2 + I) \\
        &= C_2\left(\mathbf{R} - (zI - A)^{-1}B_2\mathbf{M} -(zI - A)^{-1}\right)B_2 \\
        &= 0, \\
        \mathbf{Y}\mathbf{P}_{22} - \mathbf{W} &= (C_2\mathbf{N} + I)\mathbf{P}_{22} - C_2\mathbf{R}B_2 \\
        &= C_2\left( \mathbf{N}C_2(zI - A)^{-1} + (zI - A)^{-1} - \mathbf{R}\right)B_2 \\
        &= 0,\\
        \end{aligned}
        $$
        $$
        \begin{aligned}
        -\mathbf{U}\mathbf{P}_{22} + \mathbf{Z} &= -\mathbf{L}\mathbf{P}_{22} + \mathbf{M}B_2 + I \\
        & = (-\mathbf{L}C_2(zI-A)^{-1} + \mathbf{M})B_2 + I \\
        &= I.
        \end{aligned}
    $$
    Therefore,~\eqref{eq:aff1} and~\eqref{eq:aff2} are satisfied. Obviously, the transfer matrices in~\eqref{eq:slp-iop} are stable, \emph{i.e.}~\eqref{eq:aff3} is satisfied.
    In addition, from~\eqref{eq:slp_s2}, we have $\mathbf{R} = (I + \mathbf{N}C_2)(zI - A)^{-1}$. Then,
    $$
        \begin{aligned}
        \mathbf{L} - \mathbf{M}\mathbf{R}^{-1}\mathbf{N} &=   \mathbf{L} - \mathbf{M}(zI - A)(I + \mathbf{N}C_2)^{-1}\mathbf{N} \\
                                                        &= \mathbf{L} - \mathbf{L}C_2(I + \mathbf{N}C_2)^{-1}\mathbf{N}  \\
                                                        &= \mathbf{L} - \mathbf{L}C_2\mathbf{N}(I + C_2\mathbf{N})^{-1} \\
                                                        & = \mathbf{L}(I + C_2\mathbf{N})^{-1} \\
                                                        & = \mathbf{U}\mathbf{Y}^{-1}.
        \end{aligned}
    $$

    \emph{Statement 2}:
    %
    % considering the affine relationships~\eqref{eq:aff1}-\eqref{eq:aff3} and $\mathbf{P}_{22} = C_2(zI - A)^{-1}B_2$, we have the following algebraic equalities:
    % $$
    %     \begin{aligned}
    %     (zI - A)\mathbf{R} - B_2\mathbf{M} &= I + B_2\mathbf{Y}C_2(zI - A)^{-1}  \\
    %     &\qquad \qquad - B_2\mathbf{Y}C_2(zI - A)^{-1} \\
    %     &= I,\\
    %     (zI - A)\mathbf{N} - B_2\mathbf{L} &= B_2\mathbf{Y} - B_2\mathbf{Y} = 0, \\
    %     \mathbf{R}(zI - A) - \mathbf{N}C_2 &= I + (zI - A)^{-1}B_2\mathbf{Y}C_2 \\
    %     &\qquad \qquad - (zI - A)^{-1}B_2\mathbf{Y}C_2  \\
    %     &= I,\\
    %      \mathbf{M}(zI - A) - \mathbf{L}C_2 &= \mathbf{Y}C_2-  \mathbf{L}C_2 = 0.
    %     \end{aligned}
    % $$
    % Thus,~\eqref{eq:slp_s1}-\eqref{eq:slp_s2} are satisfied.
    In the Appendix~\ref{Sec:stable}, we verify algebraically that the transfer matrices $\mathbf{R},\mathbf{M},\mathbf{N},\mathbf{L}$ defined in~\eqref{eq:iop-sls} are exactly the closed-loop responses in~\eqref{eq:LTIsls} with controller $\mathbf{K} = \mathbf{U}\mathbf{Y}^{-1}$.

    Since $\mathbf{K} = \mathbf{U}\mathbf{Y}^{-1}$ is internally stabilizing $\mathbf{P}_{22}$, the transfer matrices $\mathbf{R},\mathbf{M},\mathbf{N},\mathbf{L}$ defined in~\eqref{eq:iop-sls} naturally satisfy the constraints~\eqref{eq:slp_s1}-\eqref{eq:slp_s3}. % are naturally satisfied.
    %
    % we have
    % $$
    %     \begin{aligned}
    %      C_2\mathbf{N} &= C_2(sI - A)^{-1}B_2\mathbf{Y} = \mathbf{P}_{22}\mathbf{Y} = \mathbf{X} - I \in \frac{1}{z}\mathcal{RH}_{\infty}, \\
    %      \mathbf{M}B_2 &= \mathbf{Y}C_2(sI - A)^{-1}B_2 = \mathbf{Y}\mathbf{P}_{22} = \mathbf{Z} - I \in   \frac{1}{z}\mathcal{RH}_{\infty}, \\
    %      C_2\mathbf{R}B_2 &= \mathbf{P}_{22} + \mathbf{P}_{22}\mathbf{Y}\mathbf{P}_{22} = \mathbf{X}\mathbf{P}_{22} = \mathbf{W}  \in   \frac{1}{z}\mathcal{RH}_{\infty}, \\
    %       \mathbf{L} &= \mathbf{Y}  \in  \mathcal{RH}_{\infty}. \\
    %      \end{aligned}
    % $$
    %
    % Using the facts that $(A, C_2)$ is detectable and $(A,B_2)$ is stabilizable, we can show that $\mathbf{R},\mathbf{M},\mathbf{N}$ are all stable; a detailed proof is provided in the appendix.
    % %{\color{blue} Since $(A, C_2)$ is detectable and $(A,B_2)$ is stabilizable, we have $\mathbf{R},\mathbf{M},\mathbf{N} \in \frac{1}{z}\mathcal{RH}_{\infty}$. }
    Finally, we can check that the transfer matrices $\mathbf{R},\mathbf{M},\mathbf{N}, \mathbf{L}$ defined in~\eqref{eq:iop-sls} satisfy
    $
        \mathbf{U}\mathbf{Y}^{-1} = \mathbf{L} - \mathbf{M}\mathbf{R}^{-1}\mathbf{N}.
    $
    %Detailed steps are provided in the appendix.
    This completes the proof.
\end{proof}

Combining Theorems~\ref{th:Youla_eq} and~\ref{th:slp_eq}, we arrive at the explicit affine mappings between Youla parameterization and SLP, which was not provided in~\cite{wang2019system}.
\begin{theorem}
\label{th:Youla_sls}
Let $\mathbf{U}_r,\mathbf{V}_r,\mathbf{U}_l,\mathbf{V}_l,\mathbf{M}_r,\mathbf{M}_l,\mathbf{N}_r,\mathbf{N}_l$ be any  doubly-coprime factorization of the strictly proper system $\mathbf{P}_{22}$. The following statements hold.
\begin{enumerate}
\item For any $\mathbf{Q} \in \mathcal{RH}_\infty$, the following transfer matrices
  \begin{subequations} \label{eq:youla_slp}
    \begin{align}
    \mathbf{R} &= (zI - A)^{-1} + \nonumber\\
    (zI - &A)^{-1}B_2(\mathbf{V}_r-\mathbf{M}_r\mathbf{Q})\mathbf{M}_lC_2(zI - A)^{-1},  \label{eq:youla_slp1} \\
            \mathbf{M} & = (\mathbf{V}_r-\mathbf{M}_r\mathbf{Q})\mathbf{M}_lC_2(zI - A)^{-1},  \label{eq:youla_slp2}\\
            \mathbf{N} &= (zI - A)^{-1}B_2(\mathbf{V}_r-\mathbf{M}_r\mathbf{Q})\mathbf{M}_l,  \label{eq:youla_slp3}\\
            \mathbf{L} &= (\mathbf{V}_r-\mathbf{M}_r\mathbf{Q})\mathbf{M}_l, \label{eq:youla_slp4}
    \end{align}
    \end{subequations}
belong to the affine subspace \eqref{eq:slp_s1}-\eqref{eq:slp_s3} and are such that $\mathbf{L} - \mathbf{M}\mathbf{R}^{-1}\mathbf{N}=(\mathbf{V}_r-\mathbf{M}_r\mathbf{Q})(\mathbf{U}_r-\mathbf{N}_r\mathbf{Q})^{-1}$.
\item For any $(\mathbf{R},\mathbf{M},\mathbf{N},\mathbf{L})$ in the affine subspace  \eqref{eq:slp_s1}-\eqref{eq:slp_s3}, the transfer matrix
    \begin{equation} \label{eq:Youla_with_RMNL}
        \begin{aligned}
        \mathbf{Q} =&\mathbf{V}_lC_2\mathbf{N}\mathbf{U}_r-\mathbf{U}_l\mathbf{L}\mathbf{U}_r-\mathbf{V}_l C_2\mathbf{R}B_2\mathbf{V}_r \\
       & \qquad \qquad \qquad \qquad +\mathbf{U}_l\mathbf{M}B_2\mathbf{V}_r + \mathbf{U}_l\mathbf{V}_r
    \end{aligned}
    \end{equation}
is such that $\mathbf{Q} \in \mathcal{RH}_\infty$ and $(\mathbf{V}_r-\mathbf{M}_r\mathbf{Q})(\mathbf{U}_r-\mathbf{N}_r\mathbf{Q})^{-1}=\mathbf{L} - \mathbf{M}\mathbf{R}^{-1}\mathbf{N}$.
\end{enumerate}
\end{theorem}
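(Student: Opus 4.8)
The plan is to obtain Theorem~\ref{th:Youla_sls} by composing the two affine correspondences already in hand: Theorem~\ref{th:Youla_eq}, which relates the Youla parameter $\mathbf{Q}$ to an IOP tuple $(\mathbf{Y},\mathbf{U},\mathbf{W},\mathbf{Z})$, and Theorem~\ref{th:slp_eq}, which relates an IOP tuple to an SLP tuple $(\mathbf{R},\mathbf{M},\mathbf{N},\mathbf{L})$. Both intermediate maps are affine and each one both (i) preserves membership in the relevant affine subspace and (ii) preserves the controller it represents. Hence their composition is again affine and inherits properties (i)-(ii), which is exactly the assertion of the theorem. No algebraic identity beyond the two theorems is required; the only work is carrying out the substitutions and a trivial simplification.

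For Statement~1 I would start from $\mathbf{Q}\in\mathcal{RH}_\infty$ and apply the forward map \eqref{eq:youla_iop} of Theorem~\ref{th:Youla_eq} to land at an IOP tuple in the affine subspace \eqref{eq:aff1}-\eqref{eq:aff3}, the relevant component being $\mathbf{U}=(\mathbf{V}_r-\mathbf{M}_r\mathbf{Q})\mathbf{M}_l$. I would then feed this tuple into the backward map \eqref{eq:iop-sls} of Theorem~\ref{th:slp_eq}. Since \eqref{eq:iop-sls1}-\eqref{eq:iop-sls4} depend on the IOP tuple \emph{only} through $\mathbf{U}$, substituting $\mathbf{U}=(\mathbf{V}_r-\mathbf{M}_r\mathbf{Q})\mathbf{M}_l$ reproduces verbatim the formulas \eqref{eq:youla_slp1}-\eqref{eq:youla_slp4}. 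By Theorem~\ref{th:slp_eq} the resulting $(\mathbf{R},\mathbf{M},\mathbf{N},\mathbf{L})$ lies in \eqref{eq:slp_s1}-\eqref{eq:slp_s3} and satisfies $\mathbf{L}-\mathbf{M}\mathbf{R}^{-1}\mathbf{N}=\mathbf{U}\mathbf{Y}^{-1}$, while by Theorem~\ref{th:Youla_eq} we have $\mathbf{U}\mathbf{Y}^{-1}=(\mathbf{V}_r-\mathbf{M}_r\mathbf{Q})(\mathbf{U}_r-\mathbf{N}_r\mathbf{Q})^{-1}$; chaining the two equalities gives the stated controller identity.

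For Statement~2 I would run the composition in the opposite order: given $(\mathbf{R},\mathbf{M},\mathbf{N},\mathbf{L})$ in the SLP subspace, apply the forward map \eqref{eq:slp-iop} of Theorem~\ref{th:slp_eq} to get $\mathbf{Y}=C_2\mathbf{N}+I$, $\mathbf{U}=\mathbf{L}$, $\mathbf{W}=C_2\mathbf{R}B_2$, $\mathbf{Z}=\mathbf{M}B_2+I$ in the IOP subspace, then substitute into the backward formula \eqref{eq:Youla_with_XYWZ} of Theorem~\ref{th:Youla_eq}. Expanding $\mathbf{V}_l(C_2\mathbf{N}+I)\mathbf{U}_r-\mathbf{U}_l\mathbf{L}\mathbf{U}_r-\mathbf{V}_l(C_2\mathbf{R}B_2)\mathbf{V}_r+\mathbf{U}_l(\mathbf{M}B_2+I)\mathbf{V}_r-\mathbf{V}_l\mathbf{U}_r$ and cancelling the $\pm\mathbf{V}_l\mathbf{U}_r$ terms yields precisely \eqref{eq:Youla_with_RMNL}. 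Stability $\mathbf{Q}\in\mathcal{RH}_\infty$ and the controller identity $(\mathbf{V}_r-\mathbf{M}_r\mathbf{Q})(\mathbf{U}_r-\mathbf{N}_r\mathbf{Q})^{-1}=\mathbf{U}\mathbf{Y}^{-1}=\mathbf{L}-\mathbf{M}\mathbf{R}^{-1}\mathbf{N}$ then follow by chaining the corresponding conclusions of Theorems~\ref{th:Youla_eq} and~\ref{th:slp_eq}.

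I do not expect a genuine obstacle: the substantive content—that each individual map respects the affine constraints and preserves the represented controller—is already established in Theorems~\ref{th:Youla_eq} and~\ref{th:slp_eq}. The only points needing care are bookkeeping: checking that \eqref{eq:iop-sls} involves the IOP tuple solely through $\mathbf{U}$, so the substitution in Statement~1 is immediate; and checking the $\mathbf{V}_l\mathbf{U}_r$ cancellation in Statement~2 so that the composed expression matches \eqref{eq:Youla_with_RMNL} exactly. One should also record that Theorem~\ref{th:slp_eq}, and hence this result, requires $\mathbf{P}_{22}$ to be strictly proper, which is assumed in the statement. A direct proof from the doubly-coprime identities and the affine subspace constraints is also possible, but the compositional argument is shorter and reuses precisely what has already been proved.
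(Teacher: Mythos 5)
Your proposal is correct and follows essentially the same route as the paper: the paper's proof of Statement~1 is exactly the composition of Statement~1 of Theorem~\ref{th:Youla_eq} with Statement~2 of Theorem~\ref{th:slp_eq}, and its proof of Statement~2 is the substitution of \eqref{eq:slp-iop} into \eqref{eq:Youla_with_XYWZ} followed by the same $\pm\mathbf{V}_l\mathbf{U}_r$ cancellation you describe. Your additional bookkeeping observations (that \eqref{eq:iop-sls} depends on the IOP tuple only through $\mathbf{U}$, and that strict properness is inherited from Theorem~\ref{th:slp_eq}) are consistent with, and slightly more explicit than, the paper's one-line justification.
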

\begin{proof}
    Statement 1 directly follows by  combining the statement 1 of Theorem~\ref{th:Youla_eq} with the statement 2 of Theorem~\ref{th:slp_eq}.

    Combining the statement 2 of Theorem~\ref{th:Youla_eq} with the statement 1 of Theorem~\ref{th:slp_eq} leads to
    \begin{equation*} %\label{eq:Youla_with_RMNL}
        \begin{aligned}
        \mathbf{Q} &=\mathbf{V}_l(C_2\mathbf{N} + I)\mathbf{U}_r-\mathbf{U}_l\mathbf{L}\mathbf{U}_r-\mathbf{V}_l C_2\mathbf{R}B_2\mathbf{V}_r \\
        & \qquad +\mathbf{U}_l(\mathbf{M}B_2 + I)\mathbf{V}_r-\mathbf{V}_l\mathbf{U}_r\,, \\
        &=\mathbf{V}_lC_2\mathbf{N}\mathbf{U}_r-\mathbf{U}_l\mathbf{L}\mathbf{U}_r-\mathbf{V}_l C_2\mathbf{R}B_2\mathbf{V}_r \\
        & \qquad +\mathbf{U}_l\mathbf{M}B_2\mathbf{V}_r + \mathbf{U}_l\mathbf{V}_r.
    \end{aligned}
    \end{equation*}
    This completes the proof.
\end{proof}

An overview of the equivalence of Youla parameterization, SLP, and IOP is shown in Fig.~\ref{Fig:Equivalence}.

\begin{remark}[Closed-loop convexity]

In both SLP and IOP, the parameters $(\mathbf{R},\mathbf{M},\mathbf{N},\mathbf{L})$ and $(\mathbf{Y},\mathbf{U},\mathbf{W},\mathbf{Z})$ have explicit and distinct physical interpretations as corresponding closed-loop transfer matrices. Also, the Youla parameter $\mathbf{Q}$ can be viewed as a closed-loop transfer matrix when the plant $\mathbf{P}_{22}$ is stable (see Remark~\ref{remark:stableQ}). In this sense, Youla parameterization, SLP, and IOP all shift the controller synthesis task from the design of a controller in~\eqref{eq:OCP}, which is non-convex, to the design of closed loop responses, resulting in convex formulations~\eqref{eq:OCPYoula},~\eqref{eq:OCPsls}, and~\eqref{eq:OCPiop}. Note that this idea of closed-loop convexity has been extensively discussed in the book~\cite{boyd1991linear}, and a comprehensive historical note is given in~\cite[Chapter 16.3]{boyd1991linear}.

%Closed-loop convexity, Styphen Boyd's book. Infinite-dimensional systems, ritz approximation, FIR approximation

\end{remark}

\begin{remark}[Numerical computation]
   After computing a doubly-coprime factorization of the plant, the Youla parameter $\mathbf{Q}$ is free in $\mathcal{RH}_{\infty}$ for parameterizing $\mathcal{C}_{\emph{\text{stab}}}$, and there are no equality constraints for achievable closed-loop responses. This feature allows to reformulate Problem~\eqref{eq:OCP} as a model matching problem~\eqref{eq:OCPYoula}, which can be reduced to the Nehari problem and then solved via the state-space method in~\cite{francis1987course}. Instead, both SLP and IOP do not require to compute a doubly-coprime factorization, but have explicit affine constraints for achievable closed-loop responses. % for parameterizing $\mathcal{C}_{\text{stab}}$.  %. % (Theorem~\ref{th:Youla_eq} shows that the explicit affine constraints in IOP can be eliminated by~\eqref{eq:youla_iop}).
    %They
    %and give two kernel space representations of the achievable system responses.
    %Due to the explicit
    Since the decision variables in constraints \eqref{eq:slp_s1}-\eqref{eq:slp_s3} and~\eqref{eq:aff1}-\eqref{eq:aff3} are infinite dimensional, there is no immediately efficient numerical method for solving~\eqref{eq:OCPsls} or~\eqref{eq:OCPiop}. The Ritz approximation~\cite[Chapter 15]{boyd1991linear} is one method for solving infinite dimensional optimization problems. Specifically, for discrete-time systems, the finite impulse response (FIR) approximation is a practical choice~\cite{wang2019system,furieri2019input}.

   % affine relationship, shown in Fig.~3. Image representation (no explicit equality constraints, good for model matching design), and kernel representation (explicit equality constraints)

   % Douly coprime factorization allows for eliminating the explicit equality constraints. Model matching problem

\end{remark}

% \begin{remark}[Controller implementation]
%   Based on the results, their controller implementation can also be realized with the Youla framework.  {\color{red} Do we need to mention controller implementation? For Youla or IOP, we usually need to explicitly comput the controller, while SLA allows an implementation based on $\mathbf{R},\mathbf{M},\mathbf{N},\mathbf{L}$ that is internally stabilizing. In my opinion, this is one major benefit of SLA. }
% \end{remark}

\begin{figure}[t]
    \centering
    \setlength{\abovecaptionskip}{6 PT}
    \setlength{\belowcaptionskip}{0em}
	\footnotesize
	\begin{tikzpicture}[every node/.style={minimum width=1cm, minimum height=1cm,text centered,align=center}]
        \node[draw,circle] (A) at (0,0) {Youla};
        \node[draw,circle] (B) at (-2.3,-3.0) {SLP};
        \node[draw,circle] (C) at (2.3,-3.0) {IOP};
        \node[draw,rectangle] (D) at (0,-1.8) {$\mathcal{C}_{\text{stab}}$};

        \draw[latex'-latex'] (A) -- node[sloped, anchor=center, above = -3mm,font=\footnotesize] {\eqref{eq:youla}} (D);
	    \draw[latex'-latex'] (B) -- node[sloped, anchor=center, above =-3mm,,font=\footnotesize] {\eqref{eq:sls}} (D);
        \draw[latex'-latex'] (C) -- node[sloped, anchor=center, above = -3mm,font=\footnotesize] {\eqref{eq:iop}} (D);

        %\draw[latex'-latex'] (A) -- node[rotate=0,above=0em,font=\footnotesize] {affine} (B);
        \draw[latex'-latex'] (A) -- node[sloped, anchor=center, above = -2mm,font=\footnotesize] {Theorem~\ref{th:Youla_sls}} (B);

        \draw[latex'-latex'] (A) --
        node[sloped, anchor=center, above = -2mm, font=\footnotesize] {Theorem~\ref{th:Youla_eq}} (C);

        \draw[latex'-latex'] (B) -- node[sloped, anchor=center, above = -2mm,font=\footnotesize] {Theorem~\ref{th:slp_eq}} (C);
	\end{tikzpicture}
	\caption{Equivalence of Youla paramterization, System-level parameterization (SLP), and Input-output parameterization (IOP).}
    \label{Fig:Equivalence}
    \vspace{-4mm}
\end{figure}
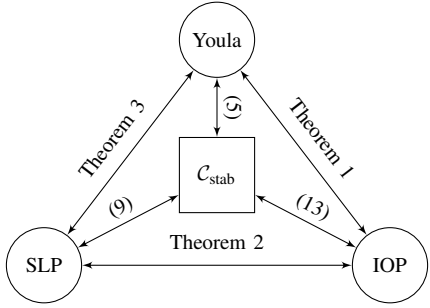

\vspace{-3mm}
\subsection{Convex system-level synthesis}

In~\cite{wang2019system}, the authors introduced a general framework of system-level synthesis (SLS), which defines ``the broadest
known class of constrained optimal control problems that can be solved using convex programming'' (cf.~\cite{anderson2019system}). % the largest known class of convex structured optimal control problems.
Thanks to the full equivalence in Theorems~\ref{th:Youla_eq}-\ref{th:Youla_sls}, we can show that 1) any SLS problem can be equivalently formulated in the Youla or input-output framework, 2) any convex SLS can be addressed by solving a convex problem in terms of Youla parameter $\mathbf{Q}$ or input-output parameters $\mathbf{Y}, \mathbf{U}, \mathbf{W}, \mathbf{Z}$.

    Let $g(\cdot)$ be a functional capturing a desired measure of the performance of the plant $\mathbf{P}_{22}$, and let $\mathcal{S}$ be a system-level constraint. The SLS problem in~\cite{wang2019system} is posed as
    \begin{equation} \label{eq:slsproblem}
        \begin{aligned}
            \min_{\mathbf{R}, \mathbf{M}, \mathbf{N}, \mathbf{L}} \quad & g(\mathbf{R}, \mathbf{M}, \mathbf{N}, \mathbf{L}) \\
            \text{subject to} \quad & \eqref{eq:slp_s1}-\eqref{eq:slp_s3}, \\
            & \begin{bmatrix} \mathbf{R} & \mathbf{N} \\
             \mathbf{M} & \mathbf{L} \end{bmatrix} \in \mathcal{S}.
        \end{aligned}
    \end{equation}
We refer the interested reader to~\cite{wang2019system} for a detailed discussion of SLS. Then, we have the following result.

\begin{theorem} \label{theo:equivalence}
   Let $\mathbf{U}_r,\mathbf{V}_r,\mathbf{U}_l,\mathbf{V}_l,\mathbf{M}_r,\mathbf{M}_l,\mathbf{N}_r,\mathbf{N}_l$ be any  doubly-coprime factorization of the strictly proper system $\mathbf{P}_{22}$. The following statements hold.
    \begin{enumerate}
        \item The SLS problem~\eqref{eq:slsproblem} is equivalent to the following problem in Youla parameter $\mathbf{Q}$,
         \begin{equation} \label{eq:sls-youla}
        \begin{aligned}
            \min_{\mathbf{Q}} \quad\; & g_1(\mathbf{Q}) \\
            \text{subject to} \quad  & \begin{bmatrix} f_1(\mathbf{Q}) & f_3(\mathbf{Q}) \\
             f_2(\mathbf{Q}) & f_4(\mathbf{Q}) \end{bmatrix} \in \mathcal{S},
        \end{aligned}
    \end{equation}
    where $f_1(\mathbf{Q}), f_2(\mathbf{Q}), f_3(\mathbf{Q}), f_4(\mathbf{Q})$ are defined by~\eqref{eq:youla_slp1} -\eqref{eq:youla_slp4}, respectively, and $$
        g_1(\mathbf{Q}) := g\left(f_1(\mathbf{Q}), f_2(\mathbf{Q}), f_3(\mathbf{Q}), f_4(\mathbf{Q})\right).
    $$
        \item The SLS problem~\eqref{eq:slsproblem} is  equivalent to the following problem in input-output parameters $\mathbf{Y}, \mathbf{U}, \mathbf{W}, \mathbf{Z}$,
         \begin{equation} \label{eq:sls-iop}
        \begin{aligned}
            \min_{\mathbf{Y},\mathbf{U},\mathbf{W},\mathbf{Z}} \quad\; & \hat{g}_1(\mathbf{U}) \\
            \text{subject to} \quad  &  ~\eqref{eq:aff1}-\eqref{eq:aff3}\\
            \quad &\begin{bmatrix} \hat{f}_1(\mathbf{U}) & \hat{f}_3(\mathbf{U}) \\
             \hat{f}_2(\mathbf{U}) & \hat{f}_4(\mathbf{U}) \end{bmatrix} \in \mathcal{S},
        \end{aligned}
    \end{equation}
    where $\hat{f}_1(\mathbf{U}), \hat{f}_2(\mathbf{U}), \hat{f}_3(\mathbf{U}), \hat{f}_4(\mathbf{U})$ are defined by~\eqref{eq:iop-sls1} -\eqref{eq:iop-sls4}, respectively, and $$
        \hat{g}_1(\mathbf{U}) := g\left(\hat{f}_1(\mathbf{U}), \hat{f}_2(\mathbf{U}), \hat{f}_3(\mathbf{U}), \hat{f}_4(\mathbf{U})\right).
    $$
        \item If the SLS problem~\eqref{eq:slsproblem} is convex, then Problems~\eqref{eq:sls-youla} and~\eqref{eq:sls-iop} are both convex.
    \end{enumerate}
   \end{theorem}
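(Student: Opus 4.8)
The plan is to derive all three statements directly from the explicit affine correspondences already established in Theorems~\ref{th:Youla_eq}--\ref{th:Youla_sls}, so that essentially nothing beyond bookkeeping is required. For Statements~1 and~2, the first step is to upgrade the one-directional maps of Theorems~\ref{th:slp_eq} and~\ref{th:Youla_sls} into mutually inverse \emph{bijections} between the relevant feasible sets. For Statement~1, Theorem~\ref{th:Youla_sls} already guarantees that $\mathbf{Q}\mapsto(f_1(\mathbf{Q}),f_2(\mathbf{Q}),f_3(\mathbf{Q}),f_4(\mathbf{Q}))$ sends $\mathcal{RH}_\infty$ into the affine subspace~\eqref{eq:slp_s1}--\eqref{eq:slp_s3}, that~\eqref{eq:Youla_with_RMNL} sends that subspace back into $\mathcal{RH}_\infty$, and that in both directions the image and the preimage realize the \emph{same} controller~$\mathbf{K}$. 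I would then invoke uniqueness of the system responses in~\eqref{eq:LTIsls} for a given~$\mathbf{K}$ and uniqueness of the Youla parameter for a given~$\mathbf{K}$ once the doubly-coprime factorization is fixed, to conclude that the two compositions are the identity, so the maps are mutually inverse bijections. With this in hand, equivalence of~\eqref{eq:slsproblem} and~\eqref{eq:sls-youla} is immediate: along the correspondence one has $\mathbf{R}=f_1(\mathbf{Q})$, $\mathbf{M}=f_2(\mathbf{Q})$, $\mathbf{N}=f_3(\mathbf{Q})$, $\mathbf{L}=f_4(\mathbf{Q})$, so the $\mathcal{S}$-constraint is literally the same, and the objective values coincide by the definition of $g_1$. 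Statement~2 goes through identically with Theorem~\ref{th:slp_eq} in place of Theorem~\ref{th:Youla_sls}: the maps~\eqref{eq:slp-iop} and~\eqref{eq:iop-sls} are mutually inverse bijections between the IOP subspace~\eqref{eq:aff1}--\eqref{eq:aff3} and the SLP subspace~\eqref{eq:slp_s1}--\eqref{eq:slp_s3}, again by uniqueness of the two families of closed-loop maps. The one extra point I would spell out is that, although $\hat{f}_1,\dots,\hat{f}_4$ and $\hat{g}_1$ depend only on~$\mathbf{U}$, the variables $\mathbf{Y},\mathbf{W},\mathbf{Z}$ in~\eqref{eq:sls-iop} are not redundant: constraints~\eqref{eq:aff1}--\eqref{eq:aff2} force $\mathbf{Y}=I+\mathbf{P}_{22}\mathbf{U}$, $\mathbf{Z}=I+\mathbf{U}\mathbf{P}_{22}$, and $\mathbf{W}=\mathbf{Y}\mathbf{P}_{22}$, so requiring~\eqref{eq:aff1}--\eqref{eq:aff3} is exactly requiring stability of these three transfer matrices together with~$\mathbf{U}$, which by Theorem~\ref{th:slp_eq} is equivalent to $(\hat{f}_1(\mathbf{U}),\dots,\hat{f}_4(\mathbf{U}))$ lying in~\eqref{eq:slp_s1}--\eqref{eq:slp_s3}.

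For Statement~3 the plan is to exploit that the substitutions are \emph{affine}. Expanding~\eqref{eq:youla_slp1}--\eqref{eq:youla_slp4}, each of $f_1,\dots,f_4$ has the form ``a constant transfer matrix plus a fixed transfer matrix times $\mathbf{Q}$ times a fixed transfer matrix'', hence is affine in~$\mathbf{Q}$; likewise each $\hat{f}_i$ in~\eqref{eq:iop-sls1}--\eqref{eq:iop-sls4} is affine in~$\mathbf{U}$. Convexity of~\eqref{eq:slsproblem} means precisely that $g$ is a convex functional and $\mathcal{S}$ is a convex set, since the equality constraints~\eqref{eq:slp_s1}--\eqref{eq:slp_s3} are affine and do not affect convexity. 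I would then conclude that $g_1=g\circ(f_1,\dots,f_4)$ is convex in~$\mathbf{Q}$, being a convex functional composed with an affine map, and that the feasible set of~\eqref{eq:sls-youla} is the preimage of the convex set~$\mathcal{S}$ under an affine map of~$\mathbf{Q}$, hence convex --- so~\eqref{eq:sls-youla} is a convex program. The same reasoning applied to the $\hat{f}_i$, together with the fact that~\eqref{eq:aff1}--\eqref{eq:aff3} is an affine subspace, yields convexity of~\eqref{eq:sls-iop}.

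The substance of the argument is entirely carried by Theorems~\ref{th:Youla_eq}--\ref{th:Youla_sls}; the one place that will need real care is verifying that the explicit affine maps are genuine mutual inverses, i.e.\ that composing Youla~$\to$~SLP~$\to$~Youla, and SLP~$\to$~IOP~$\to$~SLP, returns the starting point. I expect to settle this through uniqueness of the closed-loop responses attached to a given stabilizing controller rather than by brute-force algebra with the doubly-coprime identities, since that uniqueness is precisely what makes each of the three parameterizations a bijection onto~$\mathcal{C}_{\text{stab}}$ in the first place.
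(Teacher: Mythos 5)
Your proposal is correct and follows essentially the same route as the paper, whose proof simply states that the first two claims ``directly follow'' from Theorems~\ref{th:slp_eq} and~\ref{th:Youla_sls} and that the third follows from the affinity of $f_i$ and $\hat f_i$ together with convexity of $g$ and $\mathcal{S}$. The only difference is that you explicitly justify the ``directly follow'' step by establishing that the maps are mutually inverse bijections via uniqueness of the closed-loop responses, a detail the paper leaves implicit.
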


\begin{proof}
   The first two statements directly follow from Theorems~\ref{th:slp_eq} and~\ref{th:Youla_sls}.
   The last statement follows from the facts that $f_i(\mathbf{Q})$ and $\hat{f}_i(\mathbf{Q}), i = 1, \ldots, 4$, are all affine. Then, if $\mathcal{S}$ is a convex set and $g(\cdot)$ is a convex functional, the constraint in~\eqref{eq:sls-youla} (resp. ~\eqref{eq:sls-iop}) defines a convex set in $\mathbf{Q}$ (resp. $\mathbf{Y}, \mathbf{U}, \mathbf{W}, \mathbf{Z}$), and $g_1(\cdot)$ (or $\hat{g}_1(\cdot)$) is convex. %Therefore, both Problem~\eqref{eq:sls-youla} and Problem~\eqref{eq:sls-iop} are convex.
\end{proof}

\subsection{Distributed optimal control and quadratic invariance (QI)}

Unlike SLS, which impose constraints on closed-loop responses (see~\eqref{eq:slsproblem}), the classical distributed optimal control problem typically considers a subspace constraint $\mathcal{L}$ on the controller $\mathbf{K}$, which is formulated as~\cite{rotkowitz2006characterization, sabuau2014youla, qi2004structured}
\begin{equation} \label{eq:OCPsparsity}
        \begin{aligned}
            \min_{\mathbf{K}} \quad &\|f(\mathbf{P},\mathbf{K})\| \\
            \text{subject to} \quad & \mathbf{K} \in \mathcal{C}_{\text{stab}} \cap \mathcal{L}.
        \end{aligned}
    \end{equation}
It is shown in~\cite{rotkowitz2006characterization, sabuau2014youla} that if the subspace constraint $\mathcal{L}$ is \emph{quadratically invariant} (QI) under $\mathbf{P}_{22}$ (\emph{i.e.}, $\mathbf{K}\mathbf{P}_{22}\mathbf{K} \in \mathcal{L}, \forall \mathbf{K} \in \mathcal{L}$), then we have
$$
    \begin{aligned}
    \mathcal{C}_{\text{stab}} \cap \mathcal{L} =  \{\mathbf{K} = &(\mathbf{V}_r - \mathbf{M}_r\mathbf{Q})(\mathbf{U}_r - \mathbf{N}_r\mathbf{Q})^{-1} \mid \\
    & (\mathbf{V}_r-\mathbf{M}_r\mathbf{Q})\mathbf{M}_l \in \mathcal{L},   \mathbf{Q} \in \mathcal{RH}_{\infty}\}.
    \end{aligned}
$$
Problem~\eqref{eq:OCPsparsity} can thus be equivalently formulated as a convex problem in $\mathbf{Q}$~\cite{rotkowitz2006characterization, sabuau2014youla},
 \begin{equation} \label{eq:OCPYoula_sparsity}
        \begin{aligned}
            \min_{\mathbf{Q}} \quad &\|\mathbf{T}_{11} + \mathbf{T}_{12}\mathbf{Q}\mathbf{T}_{21}\| \\
            \text{subject to} \quad &  (\mathbf{V}_r-\mathbf{M}_r\mathbf{Q})\mathbf{M}_l \in \mathcal{L}, \\
            & \mathbf{Q} \in \mathcal{RH}_{\infty}.
        \end{aligned}
    \end{equation}

Considering the equivalence shown in Theorems~\ref{th:Youla_eq} and~\ref{th:Youla_sls}, the following corollaries are immediate.
\begin{corollary}[QI with IOP]
\label{prop:iop_structured}
    If $\mathcal{L}$ is QI under $\mathbf{P}_{22}$, then
\begin{enumerate}
    \item We have $$
    \begin{aligned}
    \mathcal{C}_{\text{stab}} \cap \mathcal{L} = \{\mathbf{K} = &\mathbf{U}\mathbf{Y}^{-1}  \mid \mathbf{Y}, \mathbf{U}, \mathbf{W},  \mathbf{Z} \; \text{are in the} \\
            &\quad\text{affine subspace~\eqref{eq:aff1}-\eqref{eq:aff3}}, \mathbf{U} \in \mathcal{L}   \}.
    \end{aligned}
$$
    \item Problem~\eqref{eq:OCPsparsity} can be equivalently formulated as a convex problem
      \begin{equation} \label{eq:OCPiop_s}
        \begin{aligned}
            \min_{\mathbf{Y}, \mathbf{U}, \mathbf{W}, \mathbf{Z}} \quad &\left\| \mathbf{P}_{11} + \mathbf{P}_{12}\mathbf{U}\mathbf{P}_{21}\right\| \\
            \text{subject to} \quad &\eqref{eq:aff1}-\eqref{eq:aff3}, \\
            & \; \mathbf{U} \in \mathcal{L}.
        \end{aligned}
    \end{equation}
\end{enumerate}

\end{corollary}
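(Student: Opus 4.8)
The plan is to derive both statements from the already-established Youla characterization of $\mathcal{C}_{\text{stab}} \cap \mathcal{L}$ under QI (stated just above the corollary, following~\cite{rotkowitz2006characterization, sabuau2014youla}), combined with the explicit bijection between Youla and input-output parameters in Theorem~\ref{th:Youla_eq}. The crucial observation is that under the map~\eqref{eq:youla_iop} the input-output parameter satisfies $\mathbf{U}=(\mathbf{V}_r-\mathbf{M}_r\mathbf{Q})\mathbf{M}_l$, which is exactly the transfer matrix constrained to $\mathcal{L}$ in the Youla formulation~\eqref{eq:OCPYoula_sparsity}. Hence the QI constraint $(\mathbf{V}_r-\mathbf{M}_r\mathbf{Q})\mathbf{M}_l \in \mathcal{L}$ translates verbatim into $\mathbf{U}\in\mathcal{L}$.

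For statement~1 I would prove the two inclusions separately. For $\subseteq$: given $\mathbf{K} \in \mathcal{C}_{\text{stab}} \cap \mathcal{L}$, invoke the QI-Youla result to obtain $\mathbf{Q} \in \mathcal{RH}_\infty$ with $\mathbf{K} = (\mathbf{V}_r - \mathbf{M}_r\mathbf{Q})(\mathbf{U}_r - \mathbf{N}_r\mathbf{Q})^{-1}$ and $(\mathbf{V}_r-\mathbf{M}_r\mathbf{Q})\mathbf{M}_l \in \mathcal{L}$; then apply the first statement of Theorem~\ref{th:Youla_eq} to produce $(\mathbf{Y},\mathbf{U},\mathbf{W},\mathbf{Z})$ in the affine subspace~\eqref{eq:aff1}-\eqref{eq:aff3} with $\mathbf{U}\mathbf{Y}^{-1} = \mathbf{K}$ and $\mathbf{U} = (\mathbf{V}_r-\mathbf{M}_r\mathbf{Q})\mathbf{M}_l \in \mathcal{L}$. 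For $\supseteq$: given $(\mathbf{Y},\mathbf{U},\mathbf{W},\mathbf{Z})$ in the affine subspace with $\mathbf{U}\in\mathcal{L}$, set $\mathbf{K} = \mathbf{U}\mathbf{Y}^{-1}$, so $\mathbf{K}\in\mathcal{C}_{\text{stab}}$ by~\eqref{eq:iop}; apply the second statement of Theorem~\ref{th:Youla_eq} to get $\mathbf{Q}\in\mathcal{RH}_\infty$ via~\eqref{eq:Youla_with_XYWZ} with $(\mathbf{V}_r-\mathbf{M}_r\mathbf{Q})(\mathbf{U}_r-\mathbf{N}_r\mathbf{Q})^{-1}=\mathbf{K}$, and check that this $\mathbf{Q}$ satisfies $(\mathbf{V}_r-\mathbf{M}_r\mathbf{Q})\mathbf{M}_l=\mathbf{U}\in\mathcal{L}$; by the QI-Youla result $\mathbf{K}\in\mathcal{C}_{\text{stab}}\cap\mathcal{L}$, so in particular $\mathbf{K}\in\mathcal{L}$.

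For statement~2 I would use the IOP cost identity $f(\mathbf{P},\mathbf{K}) = \mathbf{P}_{11} + \mathbf{P}_{12}\mathbf{U}\mathbf{P}_{21}$ together with statement~1: minimizing $\|f(\mathbf{P},\mathbf{K})\|$ over $\mathbf{K}\in\mathcal{C}_{\text{stab}}\cap\mathcal{L}$ is the same as minimizing $\|\mathbf{P}_{11} + \mathbf{P}_{12}\mathbf{U}\mathbf{P}_{21}\|$ over $(\mathbf{Y},\mathbf{U},\mathbf{W},\mathbf{Z})$ in the affine subspace~\eqref{eq:aff1}-\eqref{eq:aff3} subject to $\mathbf{U}\in\mathcal{L}$, which is exactly~\eqref{eq:OCPiop_s}. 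Convexity is then immediate: the affine subspace is convex, $\mathcal{L}$ is a subspace and hence convex, and $\mathbf{U}\mapsto\|\mathbf{P}_{11} + \mathbf{P}_{12}\mathbf{U}\mathbf{P}_{21}\|$ is a norm composed with an affine map, hence convex.

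The main obstacle I anticipate is the verification, in the $\supseteq$ direction, that the Youla parameter~\eqref{eq:Youla_with_XYWZ} reconstructed from an arbitrary $(\mathbf{Y},\mathbf{U},\mathbf{W},\mathbf{Z})$ in the affine subspace indeed yields $(\mathbf{V}_r-\mathbf{M}_r\mathbf{Q})\mathbf{M}_l=\mathbf{U}$; this amounts to confirming that the two affine maps of Theorem~\ref{th:Youla_eq} are mutually inverse, a routine but slightly tedious computation using the doubly-coprime identities and the constraints~\eqref{eq:aff1}-\eqref{eq:aff2}, and already available from~\cite{furieri2019input}. Everything else is bookkeeping, which is why the corollary is labeled ``immediate''.
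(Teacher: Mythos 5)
Your proposal is correct and follows essentially the same route as the paper, which states the corollary as an immediate consequence of the QI--Youla characterization combined with the affine bijection of Theorem~\ref{th:Youla_eq}, the key point being exactly the one you identify: $\mathbf{U}=(\mathbf{V}_r-\mathbf{M}_r\mathbf{Q})\mathbf{M}_l$ is precisely the transfer matrix constrained to $\mathcal{L}$ in~\eqref{eq:OCPYoula_sparsity}. The one verification you flag in the $\supseteq$ direction closes cleanly by noting that both quadruples are the (unique) closed-loop responses~\eqref{eq:LTIio} of the same stabilizing $\mathbf{K}$, so they must coincide and in particular $(\mathbf{V}_r-\mathbf{M}_r\mathbf{Q})\mathbf{M}_l=\mathbf{U}$.
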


\begin{corollary}[QI with SLA] \label{prop:sls_structured}
    If $\mathcal{L}$ is QI under $\mathbf{P}_{22}$, then
\begin{enumerate}
    \item We have $$
    \begin{aligned}
    \mathcal{C}_{\text{stab}} \cap \mathcal{L} = \{\mathbf{K} &= \mathbf{L} - \mathbf{M}\mathbf{R}^{-1}\mathbf{N}  \mid  \mathbf{R},   \, \mathbf{M},  \, \mathbf{N}, \, \mathbf{L} \; \text{are }  \\
             &\text{in the affine subspace~\eqref{eq:slp_s1}-\eqref{eq:slp_s3}}, \mathbf{L} \in \mathcal{L}   \}.
    \end{aligned}
$$
    \item Problem~\eqref{eq:OCPsparsity} can be equivalently formulated as a convex problem
\end{enumerate}
      \begin{equation} \label{eq:OCPiop_s}
       \begin{aligned}
            \min_{\mathbf{R}, \mathbf{M}, \mathbf{N}, \mathbf{L}} \quad &\left\| \begin{bmatrix} C_1 & D_{12} \end{bmatrix}\begin{bmatrix} \mathbf{R} &  \mathbf{N}\\  \mathbf{M} &  \mathbf{L} \end{bmatrix} \begin{bmatrix} B_1 \\ D_{21} \end{bmatrix}  + D_{11}\right\| \\
            \text{subject to} \quad & \eqref{eq:slp_s1}-\eqref{eq:slp_s3}, \\
            & \mathbf{L} \in \mathcal{L}.
        \end{aligned}
    \end{equation}

\end{corollary}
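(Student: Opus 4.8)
The plan is to transport the quadratic-invariance characterization already available in the Youla parameter---that $\mathcal{C}_{\text{stab}}\cap\mathcal{L}$ consists exactly of the controllers $(\mathbf{V}_r-\mathbf{M}_r\mathbf{Q})(\mathbf{U}_r-\mathbf{N}_r\mathbf{Q})^{-1}$ with $\mathbf{Q}\in\mathcal{RH}_\infty$ and $(\mathbf{V}_r-\mathbf{M}_r\mathbf{Q})\mathbf{M}_l\in\mathcal{L}$---through the explicit affine maps of Theorem~\ref{th:Youla_sls}. The key observation is~\eqref{eq:youla_slp4}: under the map $\mathbf{Q}\mapsto(\mathbf{R},\mathbf{M},\mathbf{N},\mathbf{L})$ the block $\mathbf{L}$ equals $(\mathbf{V}_r-\mathbf{M}_r\mathbf{Q})\mathbf{M}_l$, so the transfer matrix that the QI condition constrains to lie in $\mathcal{L}$ on the Youla side is literally the block $\mathbf{L}$ on the system-level side.

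For statement~1 I would prove the two set inclusions. For ``$\subseteq$'', given $\mathbf{K}\in\mathcal{C}_{\text{stab}}\cap\mathcal{L}$, take the Youla parameter $\mathbf{Q}$ supplied by the characterization above and feed it into Theorem~\ref{th:Youla_sls}(1); this returns $(\mathbf{R},\mathbf{M},\mathbf{N},\mathbf{L})$ in the affine subspace~\eqref{eq:slp_s1}--\eqref{eq:slp_s3} that represents the same $\mathbf{K}$ and satisfies $\mathbf{L}=(\mathbf{V}_r-\mathbf{M}_r\mathbf{Q})\mathbf{M}_l\in\mathcal{L}$, so $\mathbf{K}$ lies in the right-hand set. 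For ``$\supseteq$'', given $(\mathbf{R},\mathbf{M},\mathbf{N},\mathbf{L})$ in the affine subspace with $\mathbf{L}\in\mathcal{L}$, the controller $\mathbf{K}=\mathbf{L}-\mathbf{M}\mathbf{R}^{-1}\mathbf{N}$ is internally stabilizing by~\eqref{eq:sls}; to see $\mathbf{K}\in\mathcal{L}$, apply Theorem~\ref{th:Youla_sls}(2) to obtain $\mathbf{Q}\in\mathcal{RH}_\infty$ realizing the same $\mathbf{K}$ and then apply Theorem~\ref{th:Youla_sls}(1) to this $\mathbf{Q}$; the tuple it returns realizes $\mathbf{K}$ as well, its $\mathbf{L}$-block equals $(\mathbf{V}_r-\mathbf{M}_r\mathbf{Q})\mathbf{M}_l$, and---because the tuple in~\eqref{eq:slp_s1}--\eqref{eq:slp_s3} associated with a stabilizing controller is unique---it must coincide with the original $\mathbf{L}\in\mathcal{L}$. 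Hence $(\mathbf{V}_r-\mathbf{M}_r\mathbf{Q})\mathbf{M}_l\in\mathcal{L}$, and the Youla QI characterization gives $\mathbf{K}\in\mathcal{C}_{\text{stab}}\cap\mathcal{L}$, in particular $\mathbf{K}\in\mathcal{L}$.

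Statement~2 then follows by bookkeeping: Problem~\eqref{eq:OCPsls} is already an equivalent reformulation of the unconstrained Problem~\eqref{eq:OCP}, and by statement~1 appending the controller constraint $\mathbf{K}\in\mathcal{L}$ to~\eqref{eq:OCPsparsity} is equivalent to appending $\mathbf{L}\in\mathcal{L}$ to~\eqref{eq:OCPsls}, which is precisely the problem displayed in the statement. Convexity is immediate, as the objective is a norm of an affine function of $(\mathbf{R},\mathbf{M},\mathbf{N},\mathbf{L})$, the affine subspace~\eqref{eq:slp_s1}--\eqref{eq:slp_s3} is convex, and $\mathcal{L}$, being a subspace, is convex.

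The only point needing care---and the main obstacle---is the uniqueness fact used in ``$\supseteq$'', namely that two tuples in~\eqref{eq:slp_s1}--\eqref{eq:slp_s3} realizing the same controller $\mathbf{K}$ share the same $\mathbf{L}$-block, equivalently that the round trip from the system-level parameters to the Youla parameter and back returns the original tuple. This is where one uses that the representation is rigid: the equality constraints~\eqref{eq:slp_s1}--\eqref{eq:slp_s2} force $\mathbf{M}=\mathbf{L}C_2(zI-A)^{-1}$, $\mathbf{N}=(zI-A)^{-1}B_2\mathbf{L}$ and $\mathbf{R}=(I+\mathbf{N}C_2)(zI-A)^{-1}$, so the whole tuple is determined by $\mathbf{L}$; and, as in the proof of Theorem~\ref{th:slp_eq}, $\mathbf{K}=\mathbf{L}(I+\mathbf{P}_{22}\mathbf{L})^{-1}$, equivalently $\mathbf{L}=(I-\mathbf{K}\mathbf{P}_{22})^{-1}\mathbf{K}$, so $\mathbf{L}$ is in turn determined by $\mathbf{K}$. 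Once this rigidity is recorded, the corollary is indeed ``immediate'' from Theorems~\ref{th:Youla_eq}--\ref{th:Youla_sls} together with the quoted Youla--QI result.
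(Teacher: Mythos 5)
Your proposal is correct and follows essentially the same route as the paper, which simply declares the corollary ``immediate'' from the Youla--QI characterization together with the affine maps of Theorem~\ref{th:Youla_sls} (noting that the $\mathbf{L}$-block equals $(\mathbf{V}_r-\mathbf{M}_r\mathbf{Q})\mathbf{M}_l$). You in fact supply more detail than the paper does, in particular the rigidity argument showing the tuple $(\mathbf{R},\mathbf{M},\mathbf{N},\mathbf{L})$ is uniquely determined by $\mathbf{K}$, which the paper leaves implicit.
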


%\begin{remark}
    Corollary~\ref{prop:iop_structured} is the same as Theorem~3 of \cite{furieri2019input} and Corollary~\ref{prop:sls_structured} is consistent with Theorem 3 of~\cite{wang2019system}.
    One main insight is that the specialized  proofs in  \cite{furieri2019input,wang2019system} may be not needed anymore, thanks to the explicit affine mappings between Youla, SLP and IOP.
    We also note that the original proof of Theorem 3 in~\cite{wang2019system} is not complete: it relies on that the affine mapping $\mathbf{L} = (\mathbf{V}_r-\mathbf{M}_r\mathbf{Q})\mathbf{M}_l$ is invertible. However, given $\mathbf{L} \in \mathcal{RH}_{\infty}$, it is not immediate to see that $\mathbf{Q} = \mathbf{M}_r^{-1}(\mathbf{V}_r-\mathbf{L}\mathbf{M}_r^{-1})$ is stable. We complete this fact via the construction of $\mathbf{Q}$ in~\eqref{eq:Youla_with_RMNL}.
%\end{remark}

\begin{remark}
    It should be noted that SLS~\eqref{eq:slsproblem} and the classical distributed control problem~\eqref{eq:OCPsparsity} are two distinct formulations: 1) the former imposes constraints on closed-loop responses while the latter imposes a constraint on controller $\mathbf{K}$; 2) feasibility of the former does not imply feasibility of the latter, and vice-versa. Only when the QI property holds, can Problem~\eqref{eq:OCPsparsity} be equivalently reformulated into a convex problem in terms of Youla, system-level, or input-output parameters.  Based on the results in~\cite{lessard2015convexity}, QI is necessary for the existence of such equivalent convex reformulation. For systems with QI constraints, SLS~\eqref{eq:slsproblem} can be equivalent to the classical problem~\eqref{eq:OCPsparsity}, as shown in Corollary~\ref{prop:sls_structured}; for the cases beyond QI, they are not directly comparable.
\end{remark}

%different formulation compared to SLS

%\section{Special cases}
\section{Distributed optimal control with non-QI constraints}
\label{Section:specialcase}

In this section, we highlight that for systems with non-QI constraints, we may derive convex approximations of~\eqref{eq:OCPsparsity} using Youla, system-level, or input-output parameters. In certain cases,  a  globally  optimal  solution  can  still  be  obtained. Our approximation procedure is consistent with the idea of sparsity invariance (SI)~\cite{Furieri2019Sparsity}.
In particular, we consider Example 1 in~\cite{wang2019system, anderson2019system}. We first present simplified versions of Youla, system-level, and input-output parameterizations for special cases of state feedback (for completeness, other simplified versions for stable plants are presented in Appendix~\ref{section:special}). Then, we show that Example 1 can be solved exactly using  Youla, system-level, or input-output parameters via convex optimization.

%In this section, we show that Youla parameterization, SLP, and IOP can be simplified for some special cases. Also, the example 1 in~\cite{wang2019system, anderson2019system} can be solved using Youla parameter or the input-output parameters and convex optimization.

\subsection{Simplified parameterizations for state feedback}

In~\cite{wang2019system}, it is shown that for state feedback where $C_2 = I, D_{22} = 0$, the set of internally stabilizing controllers is
 \begin{equation} \label{eq:slp-stable-state}
 \begin{aligned}
            \mathcal{C}_{\text{stab}} = \{\mathbf{K} = \mathbf{M}\mathbf{R}^{-1} \bigm|  & \begin{bmatrix} (zI -A) & -B_2  \end{bmatrix} \begin{bmatrix} \mathbf{R} \\ \mathbf{M} \end{bmatrix} = I,  \\
            &\qquad \qquad \mathbf{M},\mathbf{R} \in \frac{1}{z}\mathcal{RH}_{\infty}
           \}.
           \end{aligned}
       \end{equation}
    The proof in~\cite{wang2019system} is directly based on the definition of internal stability. As expected, this special case~\eqref{eq:slp-stable-state} can be reduced from the general case~\eqref{eq:sls} from purely algebraic operations. We provide this alternative proof in Appendix~\ref{section:proofB}.

    For IOP and Youla parameterization, simplifications are possible with further assumptions.
    \begin{corollary}[Input-output parameterization] \label{coro:iopstate}
         Suppose $C_2 = I, D_{22}= 0$ and $B_2$ is invertible. We have
         \begin{equation} \label{eq:iop-state}
         \begin{aligned}
            \mathcal{C}_{\text{stab}} = \bigg\{\mathbf{K} = (\mathbf{Z}-I)&\mathbf{W}^{-1} \bigm|
              \begin{bmatrix} I & -\mathbf{P}_{22} \end{bmatrix} \begin{bmatrix} \mathbf{W} \\ \mathbf{Z} \end{bmatrix} = 0 \\ &  \mathbf{Z} \in \mathcal{RH}_{\infty},  \mathbf{W} \in \frac{1}{z}\mathcal{RH}_{\infty}\bigg\}.
             \end{aligned}
       \end{equation}
    \end{corollary}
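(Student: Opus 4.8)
The plan is to specialize Theorem~\ref{th:slp_eq} to the state-feedback situation $C_2 = I$, $D_{22} = 0$ with $B_2$ invertible, and to show that under these assumptions the four-block IOP collapses into the two-block characterization~\eqref{eq:iop-state}, with $\mathbf{K} = (\mathbf{Z}-I)\mathbf{W}^{-1}$ reproducing $\mathbf{K} = \mathbf{U}\mathbf{Y}^{-1}$. First I would start from the general IOP~\eqref{eq:iop}, specialize the affine constraints~\eqref{eq:aff1}-\eqref{eq:aff3} with $\mathbf{P}_{22} = (zI-A)^{-1}B_2$, and observe that~\eqref{eq:aff2} reads $\mathbf{Y} = \mathbf{P}_{22}\mathbf{W}$ and $\mathbf{U} = I + \mathbf{P}_{22}\mathbf{Z}$. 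Hence $\mathbf{Y}$ and $\mathbf{U}$ are not free: they are determined by $\mathbf{W}$ and $\mathbf{Z}$. Substituting into~\eqref{eq:aff1} gives $\mathbf{W} - \mathbf{P}_{22}(I + \mathbf{P}_{22}\mathbf{Z}) = 0$, i.e. $\mathbf{W} = \mathbf{P}_{22} + \mathbf{P}_{22}^2\mathbf{Z}$; but a cleaner bookkeeping is to keep the two independent constraints as $\mathbf{Y} - \mathbf{P}_{22}\mathbf{U} = I$ together with $\mathbf{Y} = \mathbf{P}_{22}\mathbf{W}$, which will yield the single equality $\begin{bmatrix} I & -\mathbf{P}_{22}\end{bmatrix}\begin{bmatrix}\mathbf{W}\\\mathbf{Z}\end{bmatrix} = 0$ appearing in~\eqref{eq:iop-state} once $\mathbf{U}$ is eliminated.

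Next I would pin down the controller expression. From $\mathbf{U} = I + \mathbf{P}_{22}\mathbf{Z}$ and $\mathbf{Y} = \mathbf{P}_{22}\mathbf{W}$ one gets, using invertibility of $B_2$ and hence of $\mathbf{P}_{22}$ as a transfer matrix, $\mathbf{U}\mathbf{Y}^{-1} = (I + \mathbf{P}_{22}\mathbf{Z})\mathbf{W}^{-1}\mathbf{P}_{22}^{-1}$. This is not yet in the advertised form $(\mathbf{Z}-I)\mathbf{W}^{-1}$, so the key algebraic step is to exploit the remaining affine constraint $\mathbf{W} = \mathbf{P}_{22} + \mathbf{P}_{22}^2\mathbf{Z}$ (equivalently $\mathbf{P}_{22}^{-1}\mathbf{W} = I + \mathbf{P}_{22}\mathbf{Z}$, so $\mathbf{U} = \mathbf{P}_{22}^{-1}\mathbf{W}$) to simplify: then $\mathbf{U}\mathbf{Y}^{-1} = \mathbf{P}_{22}^{-1}\mathbf{W}(\mathbf{P}_{22}\mathbf{W})^{-1} = \mathbf{P}_{22}^{-1}\mathbf{W}\mathbf{W}^{-1}\mathbf{P}_{22}^{-1} = \mathbf{P}_{22}^{-2}$, which is clearly wrong, signalling that I should \emph{not} treat $\mathbf{P}_{22}^{-1}$ as a stable object and must instead work with a different change of variables. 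The correct route is to go back to the SLP-to-IOP map~\eqref{eq:slp-iop}: in state feedback $\mathbf{N}$ is absent (only $\mathbf{R},\mathbf{M}$ survive, cf.~\eqref{eq:slp-stable-state}), so~\eqref{eq:slp-iop3}-\eqref{eq:slp-iop4} give $\mathbf{W} = C_2\mathbf{R}B_2 = \mathbf{R}B_2$ and $\mathbf{Z} = \mathbf{M}B_2 + I$, whence $\mathbf{Z} - I = \mathbf{M}B_2$ and $(\mathbf{Z}-I)\mathbf{W}^{-1} = \mathbf{M}B_2(\mathbf{R}B_2)^{-1} = \mathbf{M}\mathbf{R}^{-1}$, which is exactly the state-feedback controller $\mathbf{K} = \mathbf{M}\mathbf{R}^{-1}$ from~\eqref{eq:slp-stable-state}. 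This is the cleanest derivation: map~\eqref{eq:slp-stable-state} through~\eqref{eq:slp-iop} and read off~\eqref{eq:iop-state}.

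Concretely the steps in order are: (i) take $(\mathbf{R},\mathbf{M})$ feasible for~\eqref{eq:slp-stable-state}, set $\mathbf{W} = \mathbf{R}B_2$, $\mathbf{Z} = \mathbf{M}B_2 + I$, and verify $\mathbf{W} \in \frac{1}{z}\mathcal{RH}_\infty$, $\mathbf{Z} \in \mathcal{RH}_\infty$ from $\mathbf{R},\mathbf{M}\in\frac{1}{z}\mathcal{RH}_\infty$ and $B_2$ constant; (ii) check $\begin{bmatrix}I & -\mathbf{P}_{22}\end{bmatrix}\begin{bmatrix}\mathbf{W}\\\mathbf{Z}\end{bmatrix} = \mathbf{R}B_2 - \mathbf{P}_{22}(\mathbf{M}B_2 + I) = (\mathbf{R} - (zI-A)^{-1}B_2\mathbf{M})B_2 - (zI-A)^{-1}B_2$, which vanishes because $\begin{bmatrix}(zI-A) & -B_2\end{bmatrix}\begin{bmatrix}\mathbf{R}\\\mathbf{M}\end{bmatrix} = I$ forces $\mathbf{R} - (zI-A)^{-1}B_2\mathbf{M} = (zI-A)^{-1}$; (iii) conversely, given $(\mathbf{W},\mathbf{Z})$ feasible for~\eqref{eq:iop-state}, set $\mathbf{R} = \mathbf{W}B_2^{-1}$, $\mathbf{M} = (\mathbf{Z}-I)B_2^{-1}$ and reverse the computation to land in~\eqref{eq:slp-stable-state}; (iv) in both directions confirm $(\mathbf{Z}-I)\mathbf{W}^{-1} = \mathbf{M}\mathbf{R}^{-1}$, so the parameterized controller sets coincide, and invoke~\eqref{eq:slp-stable-state} $= \mathcal{C}_{\text{stab}}$. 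The main obstacle is bookkeeping the properness classes ($\frac1z\mathcal{RH}_\infty$ versus $\mathcal{RH}_\infty$) and confirming that $\mathbf{W}^{-1}$ and $\mathbf{R}^{-1}$ are well-defined as (in general improper) transfer matrices on the relevant domain so that the controller expressions make sense; the constant invertible $B_2$ is exactly what keeps $\mathbf{W} = \mathbf{R}B_2$ from changing these classes, and it is the assumption that makes the two-block reduction on the IOP side possible at all.
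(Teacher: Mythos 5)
Your final argument (steps (i)--(iv)) is correct, but it reaches the result by a different route than the paper: you push the two-block state-feedback SLP \eqref{eq:slp-stable-state} through the SLP-to-IOP map \eqref{eq:slp-iop} (which under $C_2=I$ collapses to $\mathbf{W}=\mathbf{R}B_2$, $\mathbf{Z}=\mathbf{M}B_2+I$) and then invoke \eqref{eq:slp-stable-state} $=\mathcal{C}_{\text{stab}}$, whereas the paper stays entirely inside the IOP: from a feasible $(\mathbf{W},\mathbf{Z})$ it reconstructs the missing blocks as $\mathbf{Y}=\mathbf{W}B_2^{-1}(zI-A)$ and $\mathbf{U}=(\mathbf{Z}-I)B_2^{-1}(zI-A)$ and verifies \eqref{eq:aff1}--\eqref{eq:aff3} and $\mathbf{U}\mathbf{Y}^{-1}=(\mathbf{Z}-I)\mathbf{W}^{-1}$ directly against the general parameterization \eqref{eq:iop}. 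The two constructions are the same change of variables up to composition (indeed $\mathbf{U}=\mathbf{L}=\mathbf{M}(zI-A)=(\mathbf{Z}-I)B_2^{-1}(zI-A)$), so nothing essential is gained or lost; your version has the advantage of reusing Theorem~\ref{th:slp_eq} and the already-proved \eqref{eq:slp-stable-state}, while the paper's is self-contained within the IOP and does not require the strictly-proper-plant machinery of the SLP beyond what the hypotheses already impose. Two caveats. First, your opening paragraph misreads \eqref{eq:aff2}: it gives $\mathbf{W}=\mathbf{Y}\mathbf{P}_{22}$ and $\mathbf{Z}=I+\mathbf{U}\mathbf{P}_{22}$, not $\mathbf{Y}=\mathbf{P}_{22}\mathbf{W}$ and $\mathbf{U}=I+\mathbf{P}_{22}\mathbf{Z}$; the ``$\mathbf{P}_{22}^{-2}$'' absurdity you run into is caused by that misreading rather than by any genuine obstruction, though since you discard that attempt it does not affect the final proof. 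Second, your converse step (iii) needs $\mathbf{M}=(\mathbf{Z}-I)B_2^{-1}\in\frac{1}{z}\mathcal{RH}_{\infty}$, i.e.\ $\mathbf{Z}-I$ strictly proper, which does not follow from the constraints listed in \eqref{eq:iop-state} alone; the paper's proof makes exactly the same implicit assumption when it asserts $(\mathbf{Z}-I)B_2^{-1}(zI-A)\in\mathcal{RH}_{\infty}$, so you are no less rigorous than the original, but it is worth flagging.
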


\begin{proof}
   %The necessary part is easy. We now prove the sufficiency part.
  %
  We show that any controller in~\eqref{eq:iop-state} is an internally stabilizing controller in~\eqref{eq:iop}. The other direction is similar. Given any $\mathbf{W}, \mathbf{Z}$ satisfying the constraints in~\eqref{eq:iop-state}, we define
   $
%   \begin{aligned}
         \mathbf{U} = (\mathbf{Z}-I)B_2^{-1}(zI - A) \in \mathcal{RH}_{\infty},
        \mathbf{Y} =  \mathbf{W}B_2^{-1}(zI - A) \in \mathcal{RH}_{\infty}.
 %  \end{aligned}
     $
 %
%   $$
%   \begin{aligned}
%          \mathbf{U} &= (\mathbf{Z}-I)B_2^{-1}(zI - A) \in \mathcal{RH}_{\infty}, \\
%         \mathbf{Y} &=  \mathbf{W}B_2^{-1}(zI - A) \in \mathcal{RH}_{\infty}.
%   \end{aligned}
%      $$
     Then, we can easily verify
     $$
     \begin{aligned}
        \mathbf{Z}-\mathbf{U}\mathbf{P}_{22} = I, \mathbf{W}-\mathbf{Y}\mathbf{P}_{22} = 0,
        \mathbf{Y}-\mathbf{P}_{22}\mathbf{U} = I.
     \end{aligned}
     $$
     Thus, $\mathbf{Y},\mathbf{U},\mathbf{W},\mathbf{Z} $ above satisfy~\eqref{eq:aff1}-\eqref{eq:aff3}.
     We also have
     $$
        \begin{aligned}
        \mathbf{U}\mathbf{Y}^{-1} &=  (\mathbf{Z}-I)B_2^{-1}(zI - A)(\mathbf{W}B_2^{-1}(zI - A))^{-1} \\
        &= (\mathbf{Z}-I)\mathbf{W}^{-1}.
        \end{aligned}
     $$
    %  $
    %   %  \begin{aligned}
    %     \mathbf{U}\mathbf{Y}^{-1} =  (\mathbf{Z}-I)B_2^{-1}(zI - A)(\mathbf{W}B_2^{-1}(zI - A))^{-1} \\
    %     = (\mathbf{Z}-I)\mathbf{W}^{-1}.
    %   % \end{aligned}
    %  $
     This completes the proof.
\end{proof}

  \begin{corollary}[Youla parameterization]
   \label{Corollary:YoulaState}
       Suppose $C_2 = B_2 = I, D_{22} = 0$. We have\footnote{Note that Corollary~\ref{Corollary:YoulaState} is only valid in discrete-time systems, since the doubly-coprime factorization~\eqref{eq:YoulaCo_state} has no counterpart in continuous time.}
    $$
    \begin{aligned}
     \mathcal{C}_{\text{stab}}  = \bigg\{\mathbf{K} = \left(-A - (I- \frac{1}{z}A)\mathbf{Q}\right)&\left(I - \frac{1}{z}\mathbf{Q}\right)^{-1},  \\
     &\qquad \mathbf{Q} \in \mathcal{RH}_{\infty} \bigg\}
     \end{aligned}
      $$
   \end{corollary}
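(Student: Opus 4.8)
The plan is to exhibit an \emph{explicit} doubly-coprime factorization of $\mathbf{P}_{22}$ adapted to the structure $C_2 = B_2 = I$, $D_{22} = 0$ — under which $\mathbf{P}_{22} = (zI-A)^{-1}$ — and then substitute it directly into the Youla parameterization~\eqref{eq:youla}. I would use
\begin{equation}\label{eq:YoulaCo_state}
\begin{aligned}
&\mathbf{M}_r = \mathbf{M}_l = I - \frac{1}{z}A, \quad \mathbf{N}_r = \mathbf{N}_l = \frac{1}{z}I, \\
&\mathbf{U}_r = \mathbf{U}_l = I, \quad \mathbf{V}_r = \mathbf{V}_l = -A.
\end{aligned}
\end{equation}
This is precisely the standard state-space coprime factorization obtained with state-feedback gain $F = -A$ and output-injection gain $L = -A$; these are admissible because $B_2 = C_2 = I$ makes $A + B_2 F = A + L C_2 = 0$, and $0$ is Schur in discrete time. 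This is exactly why the corollary has no continuous-time counterpart, as noted in the footnote: in continuous time $A+B_2F=0$ is not Hurwitz.

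I would then check the three requirements in the definition of a doubly-coprime factorization. (i) Factorization: $\mathbf{N}_r\mathbf{M}_r^{-1} = \frac{1}{z}\bigl(I - \frac{1}{z}A\bigr)^{-1} = \frac{1}{z}\cdot z(zI-A)^{-1} = (zI-A)^{-1} = \mathbf{P}_{22}$, and by symmetry $\mathbf{M}_l^{-1}\mathbf{N}_l = \mathbf{P}_{22}$. (ii) Stability: $I$ and $-A$ are constant, $\frac{1}{z}I \in \frac{1}{z}\mathcal{RH}_\infty$, and $I - \frac{1}{z}A$ is a matrix polynomial in $\frac{1}{z}$, so all eight transfer matrices lie in $\mathcal{RH}_\infty$. (iii) The identity $\begin{bmatrix}\mathbf{U}_l & -\mathbf{V}_l \\ -\mathbf{N}_l & \mathbf{M}_l\end{bmatrix}\begin{bmatrix}\mathbf{M}_r & \mathbf{V}_r \\ \mathbf{N}_r & \mathbf{U}_r\end{bmatrix} = I$ holds by four one-line cancellations, e.g.\ the $(1,1)$ block is $I\bigl(I - \frac{1}{z}A\bigr) + A\cdot\frac{1}{z}I = I$ and the $(1,2)$ block is $I(-A) + A\cdot I = 0$, the other two being analogous.

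Finally, applying the Youla parameterization~\eqref{eq:youla} to the factorization~\eqref{eq:YoulaCo_state} gives $\mathcal{C}_{\text{stab}} = \bigl\{(\mathbf{V}_r - \mathbf{M}_r\mathbf{Q})(\mathbf{U}_r - \mathbf{N}_r\mathbf{Q})^{-1} \mid \mathbf{Q}\in\mathcal{RH}_\infty\bigr\}$, and substituting yields $\mathbf{K} = \bigl(-A - (I - \frac{1}{z}A)\mathbf{Q}\bigr)\bigl(I - \frac{1}{z}\mathbf{Q}\bigr)^{-1}$, which is exactly the claimed set. The only step with any content is finding the factorization~\eqref{eq:YoulaCo_state} — equivalently, recognizing that $\frac{1}{z}(I-\frac{1}{z}A)^{-1}$ is a bona fide coprime factorization over $\mathcal{RH}_\infty$ (i.e.\ that $F=-A$ stabilizes), which is where the discrete-time hypothesis enters. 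Everything after that is a mechanical verification.
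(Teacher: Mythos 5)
Your proposal is correct and follows exactly the paper's own route: the paper's proof consists precisely of exhibiting the doubly-coprime factorization $\mathbf{U}_l = \mathbf{U}_r = I$, $\mathbf{V}_l = \mathbf{V}_r = -A$, $\mathbf{N}_l = \mathbf{N}_r = \frac{1}{z}I$, $\mathbf{M}_l = \mathbf{M}_r = I - \frac{1}{z}A$ and substituting it into~\eqref{eq:youla}. Your additional verification of the Bezout identity and the stability of the eight factors, and your observation that this corresponds to the stabilizing gains $F = L = -A$ (which is where the discrete-time hypothesis enters), merely spell out details the paper leaves implicit.
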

    \begin{proof}
      The proof directly follows by choosing the following doubly-coprime factorization:
    \begin{equation} \label{eq:YoulaCo_state}
        \begin{aligned}
       \mathbf{U}_l &= I, \mathbf{V}_l = -A, \mathbf{N}_l = \frac{1}{z}I, \mathbf{M}_l = I - \frac{1}{z}A,\\
         \mathbf{U}_r &= I, \mathbf{V}_r = -A, \mathbf{N}_r = \frac{1}{z}I, \mathbf{M}_r = I - \frac{1}{z}A.\\
         \end{aligned}
    \end{equation}
    \end{proof}

% \begin{remark}
%     It is not surprising that SLP seems to have a better ability to parameterize stabilizing controllers in state feedback. This is because SLP explicitly keeps the state $x$ and directly considers the state disturbance $\mathbf{\delta}_x$, leading to closed-loop responses that are directly related to state feedback. The equivalence of Youla, system-level, input-output parameterizations does not mean they can the same number of parameters or constraints in special cases.
% \end{remark}

\subsection{Example 1 in~\cite{wang2019system, anderson2019system}}
% Here, we show that Example 1 in~\cite{wang2019system, anderson2019system} can be solved using convex programming via Youla parameterization or IOP, in addition to SLP.
% %
Consider the following optimal control problem, which is Example 1 in~\cite{wang2019system, anderson2019system},
\begin{equation} \label{eq:Example1}
\begin{aligned}
\min_{\mathbf{K}} \quad &\lim_{T \rightarrow \infty}\frac{1}{T}\sum_{t=0}^T \mathbb{E}||x[t]||_2^2\\
	\text{subject to}\quad &~x[t+1]=Ax[t]+u[t]+w[t],\\
	\quad &~\mathbf{u}=\mathbf{K}\mathbf{x}, % K(z) \in \text{Sparse}(A^{\text{bin}})\,,
\end{aligned}
\end{equation}
 where disturbance $w[t] \stackrel{\text{i.i.d}}{\sim} \mathcal{N}(0,I)$. It can be verified (\emph{e.g.}, via solving the discrete-time algebraic Riccati equation) that the globally optimal solution is the static feedback given by $ \mathbf{K}= -A$.
Assume that $A$ is sparse and let its supports define the adjacency matrix of a graph $\mathcal{G}$. Then, the optimal controller has a particular structure according to $\mathcal{G}$.
%In this case, the optimal controller can be implemented in a localized manner according to $\mathcal{G}$.

Now suppose that we attempt to solve  problem~\eqref{eq:Example1} by converting it to
its equivalent $\mathcal{H}_2$ optimal control problem in the form of~\eqref{eq:OCPsparsity}, where the constraint $\mathcal{L}$ corresponds to the sparsity pattern of $A$ (see the Example 1 in~\cite{wang2019system, anderson2019system} for a precise definition). Since~\eqref{eq:OCPsparsity} is not convex in its present form, a certain reformulation is required for numerical computation, \emph{e.g.}, using Youla parameterization, SLP, or IOP.

%Built on the results in~\cite{lessard2015convexity}, we have the following proposition.
\begin{proposition}
 If the graph $\mathcal{G}$ is strongly connected, then Problem~\eqref{eq:Example1} with a sparsity constraint $\mathbf{K} \in \mathcal{L}$ in the form of~\eqref{eq:OCPsparsity} does not admit any equivalent convex reformulation in Youla, or SLP, or IOP.
\end{proposition}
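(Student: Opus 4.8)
The plan is to reduce the claim to the known QI characterization of convex reformulability. By the results recalled before the proposition (specifically, the equivalence in Theorems~\ref{th:Youla_eq} and~\ref{th:Youla_sls} together with the necessity direction attributed to~\cite{lessard2015convexity}), Problem~\eqref{eq:OCPsparsity} admits an equivalent convex reformulation in Youla, SLP, or IOP \emph{if and only if} the subspace constraint $\mathcal{L}$ is QI under $\mathbf{P}_{22}$. Here $\mathbf{P}_{22} = (zI-A)^{-1}$ since $B_2 = C_2 = I$, $D_{22}=0$. So it suffices to show that when $\mathcal{G}$ is strongly connected, the sparsity subspace $\mathcal{L}$ (matrices/transfer matrices whose support is contained in that of $A$, i.e.\ the support of $\mathcal{G}$) is \emph{not} QI under $(zI-A)^{-1}$.

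First I would translate QI into a concrete algebraic condition. The constraint set is $\mathcal{L} = \{\mathbf{K} \in \mathcal{RH}_\infty : \mathbf{K}_{ij} = 0 \text{ whenever } A_{ij} = 0\}$ (with diagonal entries free, or exactly following the definition of $\mathcal{L}$ in Example 1 of~\cite{wang2019system,anderson2019system}). QI requires $\mathbf{K}(zI-A)^{-1}\mathbf{K} \in \mathcal{L}$ for all $\mathbf{K} \in \mathcal{L}$. Using the Neumann-type expansion $(zI-A)^{-1} = \frac{1}{z}I + \frac{1}{z^2}A + \frac{1}{z^3}A^2 + \cdots$, the product $\mathbf{K}(zI-A)^{-1}\mathbf{K}$ has, at order $1/z^2$, the term $\mathbf{K}_0 A \mathbf{K}_0$ (writing $\mathbf{K}_0$ for a constant matrix in $\mathcal{L}$, e.g.\ taking $\mathbf{K} = \mathbf{K}_0$ a static gain with support in $\mathcal{G}$). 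Hence a necessary condition for QI is that $S \, A \, S \subseteq \mathcal{L}$ as sparsity patterns, where $S$ denotes the support pattern of $\mathcal{G}$; equivalently, the binary pattern $\mathrm{Sp}(A)\cdot\mathrm{Sp}(A)\cdot\mathrm{Sp}(A)$ must be contained in $\mathrm{Sp}(A)$ (in the Boolean-matrix sense, using the sparsity pattern of $A$ as its own adjacency).

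Next I would show this containment fails for a strongly connected $\mathcal{G}$ that is not the complete graph. Since $\mathcal{G}$ is strongly connected, for any ordered pair $(i,j)$ there is a directed path from $j$ to $i$; taking a shortest such path of length $\ell \le n-1$, the $(i,j)$ entry of $\mathrm{Sp}(A)^\ell$ is nonzero, so $\bigvee_{\ell\ge 1}\mathrm{Sp}(A)^\ell$ is the all-ones pattern. If $\mathcal{G}$ were such that $\mathrm{Sp}(A)\cdot\mathrm{Sp}(A)\cdot\mathrm{Sp}(A)\subseteq \mathrm{Sp}(A)$, then by induction all powers $\mathrm{Sp}(A)^k$ for $k\ge 1$ would be contained in $\mathrm{Sp}(A)$ — more precisely, $\mathrm{Sp}(A)^{2k+1}\subseteq\mathrm{Sp}(A)$ for all $k$, and absorbing the even powers by multiplying once more, one gets every sufficiently large power inside $\mathrm{Sp}(A)$ — hence the all-ones pattern would be contained in $\mathrm{Sp}(A)$, forcing $\mathcal{G}$ to be complete (every entry present), contradicting that $A$ is genuinely sparse (this is the content of "Assume that $A$ is sparse"). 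Therefore $\mathcal{L}$ is not QI, and by necessity of QI for convex reformulability, Problem~\eqref{eq:Example1} in the form~\eqref{eq:OCPsparsity} admits no equivalent convex reformulation in Youla, SLP, or IOP.

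The main obstacle I anticipate is handling the transfer-function (rather than static-matrix) nature of the QI condition cleanly: one must argue that a violation at the level of constant-matrix sparsity patterns (via the $1/z^2$ coefficient, or by choosing $\mathbf{K}$ static) genuinely forces $\mathbf{K}(zI-A)^{-1}\mathbf{K}\notin\mathcal{L}$, i.e.\ that the offending coefficient does not get cancelled by higher-order terms. This is where I would be careful: pick a static $\mathbf{K}_0 \in \mathcal{L}$ and an index pair $(i,j)$ outside the support of $A$ but inside the support of $\mathrm{Sp}(A)^3$, and exhibit a scalar-valued choice (e.g.\ $\mathbf{K}_0$ equal to the $0$–$1$ adjacency pattern itself, or a generic gain supported on it) for which the $1/z^2$ Laurent coefficient of $(\mathbf{K}_0(zI-A)^{-1}\mathbf{K}_0)_{ij}$ is nonzero; since that coefficient must vanish for membership in $\mathcal{L}$ (whose elements have zero $(i,j)$ entry at every order), QI fails. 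A secondary, minor point is to confirm the "only if" (necessity of QI) direction is legitimately available here: the excerpt explicitly invokes~\cite{lessard2015convexity} for exactly this, so I would cite it and not reprove it.
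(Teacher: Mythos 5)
Your proposal follows exactly the paper's route: the paper's own proof is a three-line argument that (i) strong connectivity makes $\mathbf{P}_{22}=(zI-A)^{-1}$ dense, so $\mathcal{L}$ fails to be QI, (ii) QI is necessary for an equivalent convex reformulation in the Youla parameter by~\cite{lessard2015convexity}, and (iii) Theorems~\ref{th:Youla_eq}--\ref{theo:equivalence} transfer this impossibility to SLP and IOP. Your extra combinatorial detail for step (i) goes beyond what the paper writes and is sound provided the diagonal lies in the pattern (as in Example 1); it is even cleaner to use the $1/z$ Laurent coefficient $\mathbf{K}_0^2$, which forces $\mathrm{Sp}(A)^2\subseteq\mathrm{Sp}(A)$ and hence, with strong connectivity, completeness of $\mathcal{G}$ --- otherwise the two arguments coincide.
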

\begin{proof}
    If $\mathcal{G}$ is strongly connected, then the sparsity constraint $\mathcal{L}$ is not QI under $\mathbf{P}_{22} = (zI - A)^{-1}$, since $\mathbf{P}_{22}$ is a dense transfer matrix and it fails to satisfy $\mathbf{K}\mathbf{P}_{22}\mathbf{K} \in \mathcal{L}, \forall \mathbf{K} \in\mathcal{L}$. According to~\cite{lessard2015convexity}, QI is necessary for the existence of an \emph{equivalent} convex reformulation in Youla parameter $\mathbf{Q}$ for~\eqref{eq:OCPsparsity}. The equivalence in Theorem~\ref{theo:equivalence} prevents any \emph{equivalent} convex reformulation via SLP or IOP as well.
\end{proof}

Although there is no equivalent convex reformulation when $\mathcal{G}$ is strongly connected, we could still develop a certain \emph{convex approximation} of~\eqref{eq:OCPsparsity} in Youla parameterization, SLP, or IOP. In the following, we use $\mathcal{I}$ to denote a diagonal structure.
%Convex approximation returns a globally optimal solution

\begin{enumerate}
  \item \emph{SLP:} %see Section IV.C of~\cite{wang2019system}.
  As suggested by~\cite{wang2019system}, we can add the  constraints
  $
    \mathbf{M} \in \mathcal{L},   \mathbf{R} \in \mathcal{I}
  $
  to Problem~\eqref{eq:OCPsls}, leading to a convex approximation of~\eqref{eq:OCPsparsity}. It can be checked that $\mathbf{R}= \frac{1}{z}I$ and $\mathbf{M}= -\frac{1}{z}A$ is the optimal solution, recovering the globally optimal controller $\mathbf{K} = \mathbf{M}\mathbf{R}^{-1} = -A$.

  \item \emph{Youla parameterization:} We use the simplified Youla parameterization in Corollary~\ref{Corollary:YoulaState}, and add the following constraints
  $
    -A - (I- \frac{1}{z}A)\textbf{Q} \in \mathcal{L}, I - \frac{1}{z}\textbf{Q} \in \mathcal{I},
  $
  to Problem~\eqref{eq:OCPYoula}. This leads to a convex program. We can check that the optimal solution is $\textbf{Q} = 0$, leading to

  $$
    \textbf{K} = \left(-A - (I- \frac{1}{z}A)\textbf{Q}\right)\left(I - \frac{1}{z}\textbf{Q}\right)^{-1} =  -A.
  $$

  \item \emph{IOP:} Since $C_2 = I, B_2 = I$ is invertible, we can use the result in Corollary~\ref{coro:iopstate}. Then, we introduce constraints
  $
    \mathbf{Z} - I \in \mathcal{L}, \mathbf{W} \in \mathcal{I}
  $ to Problem~\eqref{eq:OCPiop}, leading to a convex program. We can check that the solution $\textbf{W} = \frac{1}{z}I$ and $\textbf{Z} = I - \frac{1}{z}A$ is optimal.  Then,
 $
    \textbf{K} = (\textbf{Z}- I)\textbf{W}^{-1} = -A.
 $
\end{enumerate}

%cite sparsity invariance

% This is expected since the structured optimal solution in Example 1 is known analytically, and Youla, system-level and input-output parameterizations

\begin{remark}[Sparsity invariance and beyond QI]
    In the procedures above, we choose separate subspace constraints for the factors of $\mathbf{K}$ in the following form
    \begin{equation} \label{eq:SI}
           \mathbf{S} \in \mathcal{L}, \mathbf{T} \in \mathcal{I} \quad \Rightarrow  \quad \mathbf{K} = \mathbf{S}\mathbf{T}^{-1} \in \mathcal{L},
    \end{equation}
    where $\mathbf{S}, \mathbf{T}$ denote appropriate transfer matrices in Youla, system-level, and input-output parameterizations. Obviously, this choice leads to a convex inner-approximation of~\eqref{eq:OCPsparsity} since the feasible region of $\mathbf{K}$ is narrowed. For this simple instance, the globally optimal solution is parameterized when using~\eqref{eq:SI}. Thus, the globally optimal controller can be found using Youla, system-level or input-output parameters via convex optimization. However, as observed in~\cite{Furieri2019Sparsity}, this procedure has no guarantee of optimility for general constraints beyond QI using either of Youa parameterization, SLP or IOP.
    %which is not addressed by neither Youla parameterization, SLP, nor IOP.

   We note that the property~\eqref{eq:SI} is a special case of sparsity invariance (SI)~\cite{Furieri2019Sparsity}. There may exist other subspace choices for $\mathbf{S}, \mathbf{T}$ satisfying $\mathbf{S}\mathbf{T}^{-1} \in \mathcal{L}$, which still return a structured controller $\mathbf{K} \in \mathcal{L}$. Indeed, the notion of SI goes beyond QI for sparsity constraints, as it includes QI as a special case. We refer the interested reader to~\cite{Furieri2019Sparsity} for details.
\end{remark}

% \begin{remark}[Beyond QI constraints]
%     {\color{blue}Note that all the procedures above are all convex approximations of~\eqref{eq:OCPsparsity}, but they incidentally return the globally optimal solution. The property of QI is not needed for this simple instance since we already know its globally optimal solution analytically. But QI is required to serve as a certificate of the global optimality for solving general~\eqref{eq:OCPsparsity}. In other words, for subspace constraints beyond QI, we can derive convex restrictions/relaxations for~\eqref{eq:OCPsparsity} which may still return a globally optimal solution, but there is no guarantee of optimility.  }
% \end{remark}

\section{Conclusion} \label{section:conclusion}
In this paper, we have presented an explicit equivalence of Youla, system-level, and input-output parameterizations for the set of internally stabilizing controllers. A doubly-coprime factorization of the system can be considered as a way to eliminate the explicit equality constraints in SLP and IOP. Indeed, both SLP and IOP have four parameters; but due to the equality constraints, SLP and IOP have the same degree of freedom as Youla parameterization.
%This equivalence clarifies that Youla parameterization requires a doubly-coprime factorization and has no equality constraints, while SLP and IOP require no doubly-coprime factorization but have explicit equality constraints.
% We have also clarified how to incorporate QI in Youla parameterization, SLP, or IOP to  convexify the classical distributed control problem.

%It is clear that Youla parameterization/SLP and IOP

We remark that the equivalence of Youla, SLP, and IOP does not indicate they offer the same computational features. One parameterization may be better suited for a particular context. For instance, it seems that SLP is more convenient for the case of state feedback, which has found applications in quantifying sample complexity of LQR problems~\cite{dean2017sample}; IOP seems to better suit  for the case of output feedback as it exclusively deals with the maps from inputs to outputs without explicitly touching the system state; and Youla parameterization is more convenient when a doubly-coprime factorization is available \emph{a priori}.  It is interesting to investigate whether there exist other parameterizations of stabilizing controllers that suit for a particular control application. Finally, we note that Youla, SLP, and IOP naturally suit for parameterizing dynamical controllers, but none of them can parameterize the set of static stabilizing controllers in a convex way. Thus, QI is not relevant for structured static controller synthesis, and this problem deserves further investigations.
%
%Lyapunov notion is more relevant. QI is not relevant for structured static controller design
\vspace{-2mm}

\appendix

% \subsection{An explicit example}

% Show the set of stability controller is not convex

% the set of static controllers is not convex

\subsection{Proof of Statement 2 in Theorem~\ref{th:slp_eq}} \label{Sec:stable}

Given any $\mathbf{Y}, \mathbf{U}, \mathbf{W}, \mathbf{Z}$ satisfying the affine subspace~\eqref{eq:aff1}-\eqref{eq:aff3}, we know that $\mathbf{K} = \mathbf{U}\mathbf{Y}^{-1}$ internally stabilizes the plant $\mathbf{P}_{22}$. In the following, we verify that the transfer matrices $\mathbf{R},\mathbf{M},\mathbf{N},\mathbf{L}$ defined in~\eqref{eq:iop-sls} are exactly the closed-loop responses in~\eqref{eq:LTIsls} with controller $\mathbf{K} = \mathbf{U}\mathbf{Y}^{-1}$.

Recall that $\mathbf{P}_{22}$ is strictly proper, \emph{i.e.}, $\mathbf{P}_{22} = C_2(zI - A)^{-1}B_2$. Then, we can verify the following equation:
$$
  \begin{aligned}
                \mathbf{R} &= (zI - A)^{-1} + (zI - A)^{-1}B_2\mathbf{U}C_2(zI - A)^{-1}  \\
              %  &=(zI - A)^{-1}(I + B_2\mathbf{U}C_2(zI % - A)^{-1}) \\
              &=\left[(I + B_2\mathbf{U}C_2(zI - A)^{-1})^{-1}(zI - A)\right]^{-1}
                \\
                &= \left[ zI - A - (I + B_2\mathbf{U}C_2(zI - A)^{-1})^{-1}B_2\mathbf{U}C_2\right]^{-1}\\
                &= \left[ zI - A - B_2\mathbf{U}(I + C_2(zI - A)^{-1}B_2\mathbf{U})^{-1}C_2\right]^{-1} \\
                &= \left(zI - A - B_2\mathbf{U}\mathbf{Y}^{-1}C_2\right)^{-1}\\
                &= (zI - A - B_2\mathbf{K}C_2)^{-1}
                \end{aligned}
$$
Also, we can verify
$$
    \begin{aligned}
        \mathbf{M} & = \mathbf{U}C_2(zI - A)^{-1}=(I - \mathbf{K}\mathbf{P}_{22})^{-1}\mathbf{K}C_2(zI - A)^{-1} \\
        &=\mathbf{K}C_2(zI - A)^{-1}(I - B_2\mathbf{K}C_2(zI - A)^{-1})^{-1} \\
        &=\mathbf{K}C_2(zI-A - B_2\mathbf{K}C_2)^{-1} \\
        &=\mathbf{K}C_2\mathbf{R}
    \end{aligned}
$$
Similarly, we have
$        \mathbf{N} = (zI - A)^{-1}B_2\mathbf{U}  = \mathbf{R}B_2\mathbf{K}
            \nonumber,
            \mathbf{L} = \mathbf{U} = \mathbf{K}(I - \mathbf{P}_{22}\mathbf{K})^{-1}. $
Then, the transfer matrices $\mathbf{R},\mathbf{M},\mathbf{N},\mathbf{L}$ are exactly the closed-loop responses in~\eqref{eq:LTIsls} with $\mathbf{K} = \mathbf{U}\mathbf{Y}^{-1}$.

%Consider a state space realization of $\mathbf{K} = \mathbf{Y}\mathbf{X}^{-1}$ as~\eqref{eq:ControllerLTI}. Recall that the plant $\mathbf{P}_{22}$ is strictly proper, \emph{i.e.}, $D_{22} = 0$.

% Now, routine calculations give the transfer matrix $\mathbf{X}, \mathbf{Y}, \mathbf{W}, \mathbf{Z}$ in terms of the state space realization:
% {
% %\small
%   \begin{equation*} %\label{eq:LTIio}
%         \begin{aligned}
%           \begin{bmatrix} \mathbf{X} &  \mathbf{W}\\ \mathbf{Y}   &  \mathbf{Z}\end{bmatrix}  = \hat{D}^{-1}\left(\hat{C}(zI - \hat{A})^{-1}\hat{B} + \hat{D}\right)\hat{D}^{-1},
%         \end{aligned}
%     \end{equation*}
% }
% where
% $$
% \hat{B} = \begin{bmatrix}  & B_k\\B_2&  \end{bmatrix}, \hat{C} = \begin{bmatrix} C_2 & \\&C_k  \end{bmatrix},  \hat{D} = \begin{bmatrix} I & 0 \\-D_k& I \end{bmatrix},
% $$
% and
% $$
%     \hat{A} = \begin{bmatrix} A & \\& A_k \end{bmatrix} + \begin{bmatrix}  & B_k \\B_2&  \end{bmatrix}\begin{bmatrix} I & 0 \\-D_k& I \end{bmatrix}^{-1}\begin{bmatrix} C_2 & \\&C_k  \end{bmatrix}.
% $$

\subsection{Proof of~\eqref{eq:slp-stable-state}} \label{section:proofB}
  We show that any controller in~\eqref{eq:slp-stable-state} is an internally stabilizing controller in~\eqref{eq:sls}. The other direction is similar.
Consider any $\mathbf{R}, \mathbf{M}\in \frac{1}{z}\mathcal{RH}_{\infty}$ satisfying
$$  \begin{bmatrix} (zI -A) & -B_2  \end{bmatrix} \begin{bmatrix} \mathbf{R} \\ \mathbf{M} \end{bmatrix} = I.
$$
Upon defining $        \mathbf{L} = \mathbf{M}(zI - A),
        \mathbf{N}=\mathbf{R}(zI - A) - I,
$
    % \begin{equation} \label{eq:StateNL}
    %     \mathbf{L} = \mathbf{M}(zI - A),
    %     \mathbf{N}=\mathbf{R}(zI - A) - I,\\
    % \end{equation}
it is easy to see $ \mathbf{N},  \mathbf{L} \in \mathcal{RH}_{\infty}$. Also, one can straightforwardly verify that $\mathbf{N},  \mathbf{L}$ and $\mathbf{R},  \mathbf{M}$ above satisfy~\eqref{eq:slp_s1}-\eqref{eq:slp_s2} when $C_2 = I$. It is routinely to verify that
$
      %  \begin{aligned}
            \textbf{L} - \textbf{M}\textbf{R}^{-1}\textbf{N}  =  \textbf{M}(sI - A) -  \textbf{M}\textbf{R}^{-1}(\textbf{R}(sI - A) - I)
             = \textbf{M}\textbf{R}^{-1}.
       % \end{aligned}
$
It remains to check that $\mathbf{N}$ defined above is strictly proper. This fact follows from~\eqref{eq:slp_s2}  that
$
    \mathbf{N} = (zI - A)^{-1}B_2 \mathbf{L},
$
indicating that~\eqref{eq:slp_s3} also hold. Thus, the general parameterization~\eqref{eq:sls} can be reduced to~\eqref{eq:slp-stable-state}. % from algebraic operations.

\subsection{Stable plants} \label{section:special}
When $\mathbf{P}_{22} \in \mathcal{RH}_{\infty}$, we show that Youla, SLP, and IOP can be simplified, and only two paramters are required in SLP/IOP. %are all reduced into the same form.

    \begin{proposition} \label{prop:stable}
        If $\mathbf{P}_{22} \in \mathcal{RH}_{\infty}$, we have:
        \begin{enumerate}
          \item Youla parameterization can be reduced to
          \begin{equation} \label{eq:youla-stable}
            \mathcal{C}_{\text{stab}} = \{\mathbf{K} = - \mathbf{Q}(I - \mathbf{P}_{22}\mathbf{Q})^{-1} \mid   \mathbf{Q} \in \mathcal{RH}_{\infty} \}.
       \end{equation}

          \item For strictly proper $\mathbf{P}_{22}$, SLP can be reduced to
 \begin{equation} \label{eq:slp-stable}
 \begin{aligned}
            \mathcal{C}_{\text{stab}} = \bigg\{\mathbf{K} = &\mathbf{L}(C_2\mathbf{N} + I)^{-1} \bigm|  \mathbf{L} \in \mathcal{RH}_{\infty}, \\
            &\begin{bmatrix} (zI -A) & -B_2  \end{bmatrix} \begin{bmatrix} \mathbf{N} \\ \mathbf{L} \end{bmatrix} = 0
          \bigg \}.
\end{aligned}
       \end{equation}

          \item IOP can be reduced to
          \begin{equation} \label{eq:iop-stable}
          \begin{aligned}
            \mathcal{C}_{\text{stab}} = \bigg\{\mathbf{K} = \mathbf{U}\mathbf{Y}^{-1} \bigm|  \begin{bmatrix} I & -\mathbf{P}_{22} \end{bmatrix} \begin{bmatrix} \mathbf{Y} \\ \mathbf{U} \end{bmatrix} = I,  \\
            \mathbf{U} \in \mathcal{RH}_{\infty}
           \bigg\}.
           \end{aligned}
       \end{equation}

        \end{enumerate}
    \end{proposition}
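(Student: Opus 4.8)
\medskip
\noindent\emph{Proof proposal.}
The plan is to derive all three reductions by specializing the general parameterizations \eqref{eq:youla}, \eqref{eq:sls}, \eqref{eq:iop} and Theorems~\ref{th:Youla_eq}--\ref{th:slp_eq} to $\mathbf{P}_{22}\in\mathcal{RH}_\infty$. The key device is the \emph{trivial} doubly-coprime factorization
\begin{equation*}
\mathbf{U}_r=\mathbf{U}_l=\mathbf{M}_r=\mathbf{M}_l=I,\qquad \mathbf{N}_r=\mathbf{N}_l=\mathbf{P}_{22},\qquad \mathbf{V}_r=\mathbf{V}_l=0,
\end{equation*}
for which $\mathbf{P}_{22}=\mathbf{N}_r\mathbf{M}_r^{-1}=\mathbf{M}_l^{-1}\mathbf{N}_l$ and the doubly-coprime identity holds since the two block-triangular factors are inverse to each other. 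I would record one preliminary fact before anything else: under Assumption~1, $\mathbf{P}_{22}\in\mathcal{RH}_\infty$ forces $A$ to be Schur, because stabilizability and detectability exclude unstable hidden modes, so any unstable eigenvalue of $A$ would survive as a pole of $\mathbf{P}_{22}$; consequently $(zI-A)^{-1}\in\frac1z\mathcal{RH}_\infty$. This is what makes the reduced SLP form in~(2) well posed.

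Statement~(1) is then immediate: substituting the trivial factorization into \eqref{eq:youla} gives $\mathbf{K}=(\mathbf{V}_r-\mathbf{M}_r\mathbf{Q})(\mathbf{U}_r-\mathbf{N}_r\mathbf{Q})^{-1}=-\mathbf{Q}(I-\mathbf{P}_{22}\mathbf{Q})^{-1}$, which is \eqref{eq:youla-stable}. For statement~(3), I would feed the same factorization into Theorem~\ref{th:Youla_eq}: the forward map \eqref{eq:youla_iop} collapses to $\mathbf{Y}=I-\mathbf{P}_{22}\mathbf{Q}$, $\mathbf{U}=-\mathbf{Q}$, $\mathbf{W}=\mathbf{Y}\mathbf{P}_{22}$, $\mathbf{Z}=I+\mathbf{U}\mathbf{P}_{22}$, and the inverse map \eqref{eq:Youla_with_XYWZ} to $\mathbf{Q}=-\mathbf{U}$. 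The consequence is that, for a stable plant, a quadruple $(\mathbf{Y},\mathbf{U},\mathbf{W},\mathbf{Z})$ lies in the affine subspace \eqref{eq:aff1}--\eqref{eq:aff3} if and only if $\mathbf{U}\in\mathcal{RH}_\infty$, $\mathbf{Y}=I+\mathbf{P}_{22}\mathbf{U}$ (equivalently, the first block column of \eqref{eq:aff1}, namely $\mathbf{Y}-\mathbf{P}_{22}\mathbf{U}=I$), and $\mathbf{W}=\mathbf{Y}\mathbf{P}_{22}$, $\mathbf{Z}=I+\mathbf{U}\mathbf{P}_{22}$, the latter two being automatically stable; eliminating the redundant $\mathbf{W},\mathbf{Z}$ yields \eqref{eq:iop-stable}. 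I would write this as a two-way set inclusion: $\subseteq$ directly from \eqref{eq:iop}, and $\supseteq$ by taking $\mathbf{Y},\mathbf{U}$ as in \eqref{eq:iop-stable}, defining $\mathbf{W},\mathbf{Z}$ as above, verifying \eqref{eq:aff1}--\eqref{eq:aff3}, and invoking \eqref{eq:iop}.

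Statement~(2) is handled analogously from \eqref{eq:sls}. The constraint in \eqref{eq:slp-stable} relating $\mathbf{N}$ and $\mathbf{L}$ is exactly the second block column of \eqref{eq:slp_s1}, and the computation already carried out in the proof of Theorem~\ref{th:slp_eq} (Statement~1) shows $\mathbf{L}-\mathbf{M}\mathbf{R}^{-1}\mathbf{N}=\mathbf{L}(C_2\mathbf{N}+I)^{-1}$; together with $\mathbf{L}\in\mathcal{RH}_\infty$ from \eqref{eq:slp_s3}, this gives the $\subseteq$ inclusion. For $\supseteq$, given $\mathbf{L}\in\mathcal{RH}_\infty$ and $\mathbf{N}$ with $(zI-A)\mathbf{N}=B_2\mathbf{L}$, the Schur property gives $\mathbf{N}=(zI-A)^{-1}B_2\mathbf{L}\in\frac1z\mathcal{RH}_\infty$; I would then define $\mathbf{R}=(I+\mathbf{N}C_2)(zI-A)^{-1}$ and $\mathbf{M}=\mathbf{L}C_2(zI-A)^{-1}$, check that $(\mathbf{R},\mathbf{M},\mathbf{N},\mathbf{L})$ satisfies \eqref{eq:slp_s1}--\eqref{eq:slp_s3} by a short algebraic verification (using $(zI-A)\mathbf{N}=B_2\mathbf{L}$ for the first block column of \eqref{eq:slp_s1} and pole--zero cancellation for \eqref{eq:slp_s2}), and conclude from \eqref{eq:sls} that $\mathbf{K}=\mathbf{L}-\mathbf{M}\mathbf{R}^{-1}\mathbf{N}=\mathbf{L}(C_2\mathbf{N}+I)^{-1}$ is internally stabilizing.

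Most of this is routine transfer-matrix manipulation, with the two black boxes \eqref{eq:iop} and \eqref{eq:sls} doing the real work. The step that needs care is the preliminary observation that $\mathbf{P}_{22}\in\mathcal{RH}_\infty$ plus Assumption~1 forces $A$ Schur, along with the well-posedness of $I-\mathbf{P}_{22}\mathbf{Q}$, $I+\mathbf{P}_{22}\mathbf{U}$ and $C_2\mathbf{N}+I$: in the strictly proper case ($D_{22}=0$) these are biproper, being equal to $I$ at $z=\infty$, while in parts~(1) and~(3) for a general proper plant the required invertibility is inherited from the full Youla and IOP parameterizations, so no separate argument is needed.
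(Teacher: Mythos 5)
Your proposal is correct and follows essentially the same route as the paper: the trivial doubly-coprime factorization $\mathbf{M}_r=\mathbf{M}_l=\mathbf{U}_r=\mathbf{U}_l=I$, $\mathbf{N}_r=\mathbf{N}_l=\mathbf{P}_{22}$, $\mathbf{V}_r=\mathbf{V}_l=0$ for statement~(1), and elimination of the redundant parameters ($\mathbf{R},\mathbf{M}$ from $\mathbf{N},\mathbf{L}$; $\mathbf{Y},\mathbf{W},\mathbf{Z}$ from $\mathbf{U}$) by explicit substitution into the affine constraints for statements~(2) and~(3), with the same defining formulas. Your preliminary observation that Assumption~1 plus $\mathbf{P}_{22}\in\mathcal{RH}_\infty$ forces $A$ to be Schur is a point the paper uses implicitly (``Considering $(zI-A)^{-1}\in\mathcal{RH}_\infty$'') without justification, so making it explicit is a small but genuine improvement rather than a deviation.
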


    \emph{Proof:} The proof is directly from the following observations.
    \begin{enumerate}
      \item If $\mathbf{P}_{22} \in \mathcal{RH}_{\infty}$, a doubly-coprime factorization of $\mathbf{P}_{22}$ can be trivially chosen as $
        \mathbf{U}_l = I, \mathbf{V}_l = 0, \mathbf{N}_l = \mathbf{P}_{22}, \mathbf{M}_l = I,
         \mathbf{U}_r = I, \mathbf{V}_r = 0, \mathbf{N}_r = \mathbf{P}_{22}, \mathbf{M}_r = I.
        $
  %
    %   \begin{equation*} %\label{eq:dc-stable}
    %   \begin{aligned}
    %     \mathbf{U}_l &= I, \mathbf{V}_l = 0, \mathbf{N}_l = \mathbf{P}_{22}, \mathbf{M}_l = I,\\
    %      \mathbf{U}_r &= I, \mathbf{V}_r = 0, \mathbf{N}_r = \mathbf{P}_{22}, \mathbf{M}_r = I.\\
    %     \end{aligned}
    %   \end{equation*}
      Then, the parameterization~\eqref{eq:youla} is reduced to~\eqref{eq:youla-stable}.
      \item Given $\mathbf{N}, \mathbf{L}$ in~\eqref{eq:slp-stable}, we define
    %   $$
    %     \begin{aligned}
    %       \mathbf{N} &= (zI -A)^{-1}B_2 \mathbf{L},  \\
    %       \mathbf{R} &= (zI -A)^{-1} + \mathbf{N}C_2(zI - A)^{-1}, \\
    %       \mathbf{M} &= \mathbf{L}C_2(zI - A)^{-1}.  \\
    %     \end{aligned}
    %   $$
      $
        %\begin{aligned}
           \mathbf{R} = (zI -A)^{-1} + \mathbf{N}C_2(zI - A)^{-1},
           \mathbf{M} = \mathbf{L}C_2(zI - A)^{-1}.  %\\
       % \end{aligned}
      $
      Considering $(zI - A)^{-1} \in \mathcal{RH}_{\infty}$, if $\mathbf{L} \in \mathcal{RH}_{\infty}$,  we have $ \mathbf{N}, \mathbf{R}, \mathbf{M} \in \frac{1}{z}\mathcal{RH}_{\infty}$. It can be verified that the  $\mathbf{R}, \mathbf{M}, \mathbf{N}, \mathbf{L}$ above satisfies~\eqref{eq:slp_s1}-\eqref{eq:slp_s3} when~\eqref{eq:slp-stable} holds. Also, we have
      $$
        \begin{aligned}
            \mathbf{L} - \mathbf{M}\mathbf{R}^{-1}\mathbf{N} %&=  \mathbf{L} - \mathbf{L}C_2(zI - A)^{-1}\left((zI -A)^{-1} + \mathbf{N}C_2(zI - A)^{-1}\right)^{-1}\mathbf{N} \\
            & = \mathbf{L} -\mathbf{L}C_2(I + \mathbf{N}C_2)^{-1}\mathbf{N} \\
            & = \mathbf{L}(C_2\mathbf{N} + I)^{-1}.
        \end{aligned}
      $$
    %   $
    %   % \begin{aligned}
    %         \mathbf{L} - \mathbf{M}\mathbf{R}^{-1}\mathbf{N}
    %          = \mathbf{L} -\mathbf{L}C_2(I + \mathbf{N}C_2)^{-1}\mathbf{N} %\\
    %          = \mathbf{L}(C_2\mathbf{N} + I)^{-1}.
    %   %  \end{aligned}
    %   $
      Thus,~\eqref{eq:slp_s1}-\eqref{eq:slp_s3} can be reduced to~\eqref{eq:slp-stable}.

      \item Upon defining
          $
      %  \begin{aligned}
            \mathbf{Y} = I + \mathbf{P}_{22}\mathbf{U},
            \mathbf{Z} = I + \mathbf{U}\mathbf{P}_{22},
            \mathbf{W} = \mathbf{P}_{22} \mathbf{Z},
     %   \end{aligned}
    $
    % $$
    %     \begin{aligned}
    %         \mathbf{Y} &= I + \mathbf{P}_{22}\mathbf{U},\\
    %         \mathbf{Z} &= I + \mathbf{U}\mathbf{P}_{22}, \\
    %         \mathbf{W} &= \mathbf{P}_{22} \mathbf{Z},
    %     \end{aligned}
    % $$
    we have $\mathbf{Y}, \mathbf{W}, \mathbf{Z} \in \mathcal{RH}_{\infty}$
    if $\mathbf{P}_{22}, \mathbf{U} \in \mathcal{RH}_{\infty}$. Also, the  $\mathbf{Y}, \mathbf{U}, \mathbf{W}, \mathbf{Z}$ above satisfies~\eqref{eq:aff1}-\eqref{eq:aff2} if~\eqref{eq:iop-stable} holds. Thus,~\eqref{eq:aff1}-\eqref{eq:aff3} can be reduced to~\eqref{eq:iop-stable}.
    \end{enumerate}

    \begin{remark}\label{remark:stableQ}
        The first statement in Proposition~\ref{prop:stable} is a classical result~\cite[Corollary 5.5]{zhou1996robust}. We note that parameterizations~\eqref{eq:slp-stable} and~\eqref{eq:iop-stable} are identical to~\eqref{eq:youla-stable} by noticing that
        $
            \mathbf{L} = \mathbf{U} = - \mathbf{Q}.
        $ They are all reduced to the same form.
       % This fact can also be seen from~\eqref{eq:Youla_with_XYWZ} and~\eqref{eq:Youla_with_RMNL} by subsituting~\eqref{eq:dc-stable}.
    \end{remark}

\bibliographystyle{IEEEtran}
\bibliography{IEEEabrv,references}

\end{document}